\documentclass[12pt,a4paper]{article}
\usepackage{fancyhdr}
\usepackage[bottom=1.5in, top=1.25in]{geometry}
\usepackage{fancyhdr}

\usepackage{amssymb,amsmath,amsthm}
\usepackage[english]{babel}
\usepackage{t1enc}
\usepackage[utf8]{inputenc}
\usepackage{epsfig}
\usepackage{subfig}
\usepackage{epstopdf}
\usepackage{xcolor}
\usepackage{comment}
\usepackage{hyperref}
\usepackage[shortlabels]{enumitem}
\usepackage{titlesec}
\titleformat{\subsubsection}[runin]{}{}{}{}[]

\newtheorem{thm}{Theorem}
\newtheorem{cor}[thm]{Corollary}
\newtheorem{lem}[thm]{Lemma}
\newtheorem{prop}[thm]{Proposition}

\theoremstyle{definition}
\newtheorem{defi}[thm]{Definition}
\newtheorem{rem}[thm]{Remark}
\newtheorem{ex}[thm]{Example}

\usepackage{indentfirst}
\def\red1{\color{red}1\color{black}}
\def\blue1{\color{blue}1\color{black}}

\def\01{0\text{-}1}

\makeatletter
\def\blfootnote{\gdef\@thefnmark{}\@footnotetext}
\makeatother

\title{Flip dynamics on perfect matchings beyond bipartite and planar graphs}
\author{Vladimir Bo\v skovi\'c \thanks{Universit\'e Paris-Saclay, CNRS, CEA, Institut de Physique Th\'eorique, 91191 Gif-sur-Yvette, France\\Email address: vladimir.boskovic at ipht dot fr}}

\begin{document}

\maketitle

\begin{abstract}
     We study the flip dynamics on perfect matchings of graphs, where a flip consists of replacing the edges of a perfect matching along an even cycle with the complementary alternating edges. In particular, we want to bound the minimum length of cycles such that any two perfect matchings are related by flips of such cycles.
     
     Given a finite graph $G$, we consider the families of decorated graphs obtained by replacing each vertex of $G$ with a decoration satisfying suitable conditions on the existence of perfect matchings in its subgraphs. We prove a general upper bound for this family. We then obtain stronger bounds for two families of decorations: clique decorations and Fisher decorations, the latter under the assumption that the underlying graph is planar. In both cases, the bounds are independent of the sizes of the decorations.
\end{abstract}

\section{Introduction}

Let $G$ be a finite graph. A \emph{perfect matching} in a graph is a subset of edges such that each vertex is incident to exactly one edge. Let $\mathcal{M}(G)$ denote the space of perfect matchings of $G$, and let $M(G) = |\mathcal{M}(G)|$ be the number of perfect matchings of $G$. 

We consider the \emph{flip dynamics} on the space of perfect matchings $\mathcal{M}(G)$. A flip consists of selecting an even cycle $C$ in $G$ such that every second edge of $C$ belongs to the current perfect matching and then replacing these edges with the complementary set of edges in $C$. We aim to minimize the length of cycles required so that such flips suffice to connect any two perfect matchings of $G$.

This dynamics is of particular interest because it induces a Markov chain on $\mathcal{M}(G)$. The irreducibility of this chain is equivalent to the property that any two perfect matchings can be connected by a sequence of flips. The mixing times for these Markov chains have been studied for different families of graphs, in particular for lozenge tilings~\cite{LaslierToninelli23,AggarwalToninelli26}. In the probabilistic literature the term \emph{Glauber dynamics} is commonly used. Here, we use the term flip dynamics to emphasize the combinatorial nature of our results.

In the planar bipartite setting, the height function introduced by Thurston~\cite{Thurston90} implies that the space of perfect matchings is always connected by flips on the inner faces of the graph. This problem in the non-bipartite or non-planar setting has remained open. 

For finite portions of the triangular lattice, Kenyon and Rémila~\cite{KenRemila} proved that the space of perfect matchings is flip-connected using flips along cycles of length at most $8$. More recently, R{\o}ising and Zhang~\cite{RoisingZhang} studied several classes of non-bipartite planar lattices, such as the Kagome lattice and the truncated hexagonal lattice, and described the cycles that ensure flip-connectedness. 

Hartarsky, Lichev, and Toninelli~\cite{HarLichTon2024} studied this question for hypercubes. They also strengthened the result for the triangular lattice by showing that for parallelogram-shaped portions, flips on cycles of length at most $6$ suffice. Among other examples, Petrov~\cite{Petrov_simulations} has implemented the flip dynamics for arbitrary portions of the triangular lattice.

To state our results, we introduce the notion of a \emph{cycle basis}. It is defined as a minimal set of cycles of $G$ that expresses every even subgraph (a subgraph of $G$ with all its vertices of even degree) as a symmetric difference over the cycles in the basis. A canonical way to define the basis is by taking a spanning tree of $G$ and adding any remaining edge that uniquely defines a cycle. If $G$ is a planar graph, then the most natural choice for its cycle basis is the set of its inner faces.

We study the flip dynamics on two families of decorated graphs. The first family consists of clique-decorated graphs denoted by $G_{clique}$, in which every vertex of degree $d$ is replaced with a complete graph $K_d$. See Figure \ref{fig:K4deco} for an example. Under the following assumptions, we can show that the size of the cycles does not depend on the degrees of the vertices in $G$.

\begin{figure}[ht]
    \centering
    \includegraphics[width=0.85\textwidth]{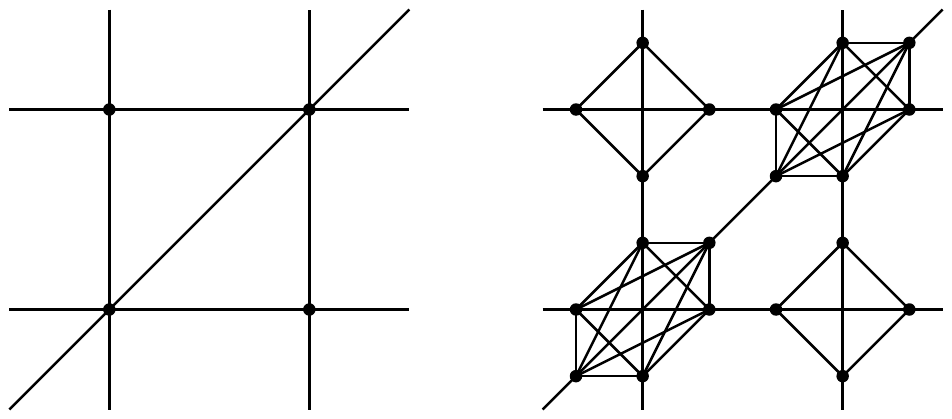}
    \caption{\label{fig:K4deco} An example of a portion of a clique decorated graph.}
\end{figure}

\begin{thm} \label{mainthm2}
    Let $G$ be a graph in which all vertex degrees have the same parity. Then  $\mathcal{M}(G_{clique})$ is flip-connected on the cycles of length at most \[\max_{1 \leq i \leq k}3|C_i|,\] where $\{C_1, \dots, C_k\}$ is a cycle basis of $G$. 
\end{thm}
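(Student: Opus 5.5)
The plan is to split a perfect matching of $G_{clique}$ into an external part and an internal part and to handle the two parts separately. For a vertex $v$ of $G$ of degree $d$, write $K^v$ for its clique; each vertex of $K^v$ is the endpoint of exactly one original edge of $G$ at $v$. A perfect matching $M$ of $G_{clique}$ determines the set $S(M)\subseteq E(G)$ of original edges it uses. The vertices of $K^v$ not covered by $S(M)$ must be matched inside $K^v$, so $\deg_{S(M)}(v)\equiv d(v) \pmod 2$ for every $v$. Conversely, any $S$ with this parity condition extends to a perfect matching, since the uncovered vertices of $K^v$ form a clique of even size. Hence for two perfect matchings $M,M'$ the set $S(M)\,\triangle\, S(M')$ has all degrees even. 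It is therefore an even subgraph of $G$ and can be written as $C_{i_1}\triangle\cdots\triangle C_{i_r}$ over the cycle basis. I will use the hypothesis that all degrees have the same parity only to ensure that every intermediate edge set $S(M)\triangle C_{i_1}\triangle\cdots\triangle C_{i_j}$ again satisfies the parity condition; in fact toggling an even subgraph always preserves the parity condition, so I expect the parity hypothesis to be needed at most for existence and nondegeneracy and not in an essential way.

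\emph{Step 1 (internal moves).} Two perfect matchings with the same external set $S$ differ only inside the cliques. Any two perfect matchings of a complete graph $K_{2m}$ are connected by flips on $4$-cycles: replacing $\{ab,cd\}$ by $\{ac,bd\}$ performs a transposition of partners, and transpositions generate all pairings. Since every basis cycle has $|C_i|\ge 3$, we have $4\le 3|C_i|$, so these flips are allowed.

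\emph{Step 2 (toggling one basis cycle $C$).} Let $C=v_1v_2\cdots v_\ell$ and let $e_j=v_jv_{j+1}$. At $v_j$ the cycle uses $e_{j-1}$ and $e_j$, whose endpoints in $K^{v_j}$ are $a_j$ and $b_j$. First apply the $4$-cycle flips of Step 1 so that, whenever neither $e_{j-1}$ nor $e_j$ lies in $S$, the vertices $a_j$ and $b_j$ are matched to each other. Next build an alternating cycle $\widetilde C$ in $G_{clique}$ that uses the edges $e_j$ together with a short path through each $K^{v_j}$.
\begin{itemize}
\item If both $e_{j-1}$ and $e_j$ are in $S$, the path through $K^{v_j}$ is the single non-matching edge $a_jb_j$.
\item If neither is in $S$, the path is the matching edge $a_jb_j$.
\item If exactly one is in $S$, say $e_{j-1}$, then $b_j$ is matched to some $c_j$, and the path is $a_j c_j b_j$ of length $2$.
\end{itemize}
These choices make $\widetilde C$ alternate, and it is a simple cycle because $C$ is simple and the paths lie in distinct cliques. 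Flipping $\widetilde C$ replaces $S$ by $S\triangle C$, and $|\widetilde C|\le |C|+2|C|=3|C|$.

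\emph{Step 3 (assembly).} From $M$, apply Step 2 successively for $C_{i_1},\dots,C_{i_r}$ to reach a matching with external set $S(M')$. Then use Step 1 to adjust the internal parts to agree with $M'$.

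The main obstacle is the bookkeeping in Step 2. One must check that the preliminary $4$-cycle flips at different vertices of $C$ do not interfere with one another; they act in different cliques, so they should not. One must also verify that the cycle built from these local gadgets is genuinely alternating and of even length; this parity should follow from the alternation being forced locally at each vertex. Finally, the case $d(v)=1$ (a $K_1$) and the case where $C$ passes through a vertex with only the two cycle edges need a short separate check, since the gadgets there are degenerate.
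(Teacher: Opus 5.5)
Your proof is correct and follows the same route as the paper. Both use internal $4$-cycle flips inside each clique (the paper's Lemma~\ref{4-flips}), reduce to toggling one basis cycle, and place a gadget of length $1$ or $2$ in each clique along $C_i$, which gives a cycle of length at most $3|C_i|$.

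The differences are small but worthwhile. You pre-flip the current matching rather than the target, and you take the middle vertex $c_j$ to be the current partner of $b_j$, which makes alternation automatic. Combined with your condition $\deg_S(v)\equiv d(v)\pmod 2$, this shows that the same-parity hypothesis, which the paper invokes to find the middle vertex and to split into even and odd cases, is not actually needed. Your degenerate cases are also immediate: a vertex with $d(v)=1$ lies on no cycle, and for $d(v)=2$ the ``exactly one'' case is excluded by parity.
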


The second family consists of \emph{Fisher graphs} denoted by $F_G$, introduced by Fisher~\cite{Fisher66} to describe the correspondence between the Ising model of $G$ and the dimer model of $F_G$. Let $G$ be a graph equipped with a fixed cyclic ordering of the edges incident to each vertex, induced by a drawing of $G$ in the plane (or embedding, if $G$ is planar). The Fisher graph $F_G$ is constructed as follows. 

For each vertex $u \in V(G)$ of degree $d$, let $e_1, \dots, e_d$ be the edges incident to $u$, listed according to their cyclic order around $u$. We replace $u$ with a cycle of length $2d$ on the vertex set $\{u_1, u_2, \dots, u_{2d}\}$, whose vertices are arranged in cyclic order consistent with the order of the edges $e_1, \dots, e_d$. For each $i \in \{1, \dots, d\}$, we replace $e_i$ with a long edge incident to $u_{2i-1}$, so that every vertex $u_{2i-1}$ is incident to exactly one long edge. Moreover, for all $1 \leq i \leq d$, we add an edge between $u_{2i}$ and $u_{2i+2}$ (indices modulo $2d$). We refer to the graph that replaces the vertex $u$ as the \emph{Fisher decoration}. See Figure \ref{fig:fishdeg5} for an example when $d = 5$.

\begin{figure}[ht]
    \centering
    \includegraphics[width=0.65\textwidth]{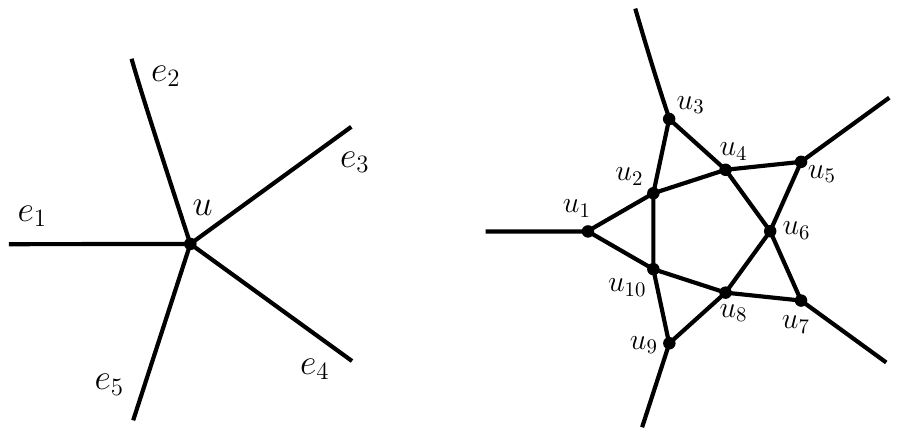}
    \caption{\label{fig:fishdeg5} The Fisher decoration on the vertex of degree $5$.}
\end{figure}
    
\begin{thm} \label{mainthm3}
    Let $G$ be a planar graph on $n$ vertices. Then $\mathcal{M}(F_{G})$ is flip-connected on the cycles of length at most
    \[\max(\max_{1 \leq i \leq k} 4|F_i|,\max_{1 \leq j \leq n} 2\deg(v_j)),\]
    where $\{F_1, \dots, F_k\}$ denotes the set of inner faces of $G$. 
\end{thm}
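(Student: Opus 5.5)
The plan is to use the standard correspondence between perfect matchings of the Fisher graph and even subgraphs of $G$. For a perfect matching $m$ of $F_G$, let $\pi(m)\subseteq E(G)$ be the set of edges whose long edges are \emph{not} in $m$; following Fisher's correspondence with the Ising model, I will take this as the relevant projection. Inside the decoration of a vertex $u$ of degree $d$, the vertices $u_{2i-1}$ left uncovered by long edges, together with the vertices $u_{2i}$, must be matched internally. A parity count shows that the number of uncovered $u_{2i-1}$ is even, so $\pi(m)$ is an even subgraph of $G$.

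The proof then has two layers.

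\textbf{Fibre layer.} Fix an even subgraph $H$ and consider the matchings $m$ with $\pi(m)=H$. Two such matchings agree on all long edges, so they differ only inside the decorations. I will show that, inside a single decoration, any two internal matchings compatible with the same boundary condition are connected by flips of length at most $2\deg(u)$. The natural candidates are the outer $2d$-cycle $u_1u_2\cdots u_{2d}$ and the cycles formed from chords $u_{2i}u_{2i+2}$ together with boundary paths. Each of these has length at most $2d$. I expect an induction on $d$ to work: contract a chord, or analyze the local matching near an unmatched pair $u_{2i-1}$, to reduce to a smaller decoration. Counting the internal matchings for a given boundary condition should make the structure explicit.

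\textbf{Base layer.} Because $G$ is planar, every even subgraph of $G$ is a symmetric difference of inner faces. It therefore suffices to show the following: given $m$ with $\pi(m)=H$ and an inner face $F$, after possibly modifying $m$ inside its fibre, there is a flip along an alternating cycle $C$ of length at most $4|F|$ with $\pi(m\triangle C)=H\triangle F$. I will construct $C$ by following the face $F$ in $F_G$. For each edge of $F$ it uses the long edge, and at each corner vertex $u$ of $F$ it uses a path inside the decoration between the two consecutive long-edge endpoints $u_{2i-1}$ and $u_{2i+1}$. Planarity ensures that these endpoints are adjacent in the cyclic order, since $F$ is a face. Such a path ($u_{2i-1},u_{2i},u_{2i+1}$, possibly detouring through a chord) has at most $3$ edges. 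So $C$ has length at most $|F|+3|F|=4|F|$.

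\textbf{Main obstacle.} The hardest step is ensuring that this cycle is alternating with respect to the current matching. The long edges of $F$ alternate in and out of $m$ only if their statuses are suitable, which in general they are not. I would first use fibre moves (length $\le 2\deg$) to put each corner decoration into a canonical local state in which $u_{2i}$ is matched appropriately, and then choose a 2- or 3-edge connector at each corner according to whether the two incident long edges of $F$ are both in $m$, both out, or mixed, so that alternation and even length hold. Verifying that such a canonical state always exists for every boundary configuration, and that the resulting cycle is simple, is the case analysis I expect to be delicate. If alternation fails for some faces, I would instead allow $C$ to be a union of alternating cycles, each of length at most $4|F|$, and flip them in sequence.

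Combining the two layers gives the result. Any two matchings $m,m'$ project to even subgraphs $H,H'$, and $H\triangle H'$ is a sum of inner faces. Base moves transport $m$ into the fibre of $H'$, and fibre moves then connect it to $m'$. All flips used have length at most $\max(\max_i 4|F_i|,\max_j 2\deg(v_j))$.
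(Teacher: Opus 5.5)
This is essentially the paper's argument. Flips inside each Fisher decoration handle matchings with the same even subgraph; these flips have length at most $2\deg(v)$, since two internal matchings with the same boundary condition live on at most $2\deg(v)$ vertices. Then a single flip per inner face $F_i$, using a $2$-edge connector at mixed corners and a $3$-edge connector at corners where both long edges of $F_i$ have the same status (after adjusting the matching inside that decoration where needed), gives the bound $4|F_i|$. One small correction: a decoration has $2\deg(u)$ vertices, so it is the set of long edges \emph{in} $m$ that has even degree at every vertex. Your $\pi(m)$ is therefore an even subgraph only when all degrees of $G$ are even, but this is harmless because only $\pi(m)\triangle\pi(m')$ enters the argument.
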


The rest of the paper is organized as follows.
    
In Section \ref{enumeration}, we will revisit the proof for counting the number of perfect matchings in truncated cubic graphs, obtained in~\cite{CiucuNotes10, CiucuLiuYang}. Then we will use that approach to describe how to efficiently sample a uniform perfect matching from it. We finish the section by describing the partial order on the space of perfect matchings. 

In Section \ref{decorated flip dynamics}, we consider the classes of graphs that are called \emph{decorated graphs}. They are obtained from an initial graph $G$ by replacing every vertex in it with a connected graph that satisfies certain properties regarding the existence of a perfect matching in its subgraphs. We prove a general upper bound for the cycles that assures that the decorated graph is flip-connected. Note that this bound depends not only on the sizes of basis cycles but also on the sizes of decorations. We then prove Theorems \ref{mainthm2} and \ref{mainthm3}.

\section{Enumeration of perfect matchings} \label{enumeration}

In this section, we recall the result of Ciucu, Liu, and Yang~\cite{CiucuLiuYang} on the enumeration of perfect matchings for truncated cubic graphs (equivalently, Fisher graphs obtained from cubic graphs).

For $k \in \mathbb{N}$, we define a $k$-factor of $G$ as a $k$-regular spanning subgraph of $G$. In particular, $1$-factors are precisely perfect matchings. More generally, a $\{k, \ell\}$-factor is a spanning subgraph in which every vertex has degree either $k$ or $\ell$. We denote by $F_{\{k, \ell\}}(G)$ the number of $\{k, \ell\}$-factors of $G$. 

Before stating their result, we describe a bijection between the $\{1,3\}$-factors of $G$ and the perfect matchings of its truncated graph $G_\Delta$, obtained by replacing each vertex of $G$ with a triangle.

\begin{figure}[ht]
    \centering
    \includegraphics[width=0.55\textwidth]{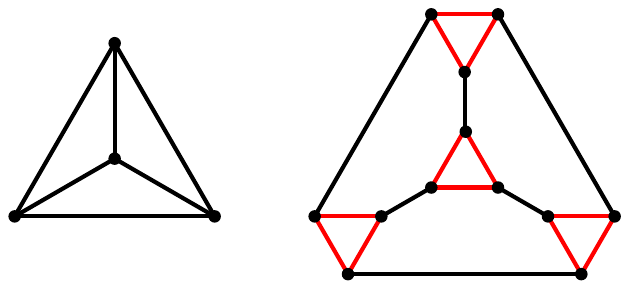}
    \caption{\label{fig:trunctetra} The tetrahedron $T$ and the truncated tetrahedron $T_\Delta$.}
\end{figure}

\begin{lem} [Proposition 5.~\cite{CiucuLiuYang}] \label{perf matchings and 1-3 factors}
    Let $G$ be a cubic graph. There is a bijection between $\{1,3\}$-factors of $G$ and perfect matchings of the truncated cubic graph $G_\Delta$. Therefore,
    \[M(G_{\Delta}) = F_{\{1,3\}}(G).\]
\end{lem}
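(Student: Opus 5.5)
The plan is to build an explicit bijection between perfect matchings of $G_\Delta$ and $\{1,3\}$-factors of $G$, by restricting a matching to the "long" edges. Every edge of $G_\Delta$ is either an edge of one of the triangles $T_v$ replacing the vertices $v \in V(G)$, or a long edge corresponding to an edge of $G$. The long edges of $G_\Delta$ are thus identified with $E(G)$. To a perfect matching $\mathcal{M}$ of $G_\Delta$, I will associate the spanning subgraph $\phi(\mathcal{M})$ of $G$ consisting of those edges of $G$ whose corresponding long edge lies in $\mathcal{M}$.

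First I will check that $\phi(\mathcal{M})$ is a $\{1,3\}$-factor. Fix $v\in V(G)$ with triangle $T_v=\{v_1,v_2,v_3\}$, where $v_i$ is incident to exactly one long edge $e_i$. Let $j$ be the number of vertices of $T_v$ covered by long edges of $\mathcal{M}$; then $j$ is exactly $\deg_{\phi(\mathcal{M})}(v)$. The remaining $3-j$ vertices of $T_v$ must be matched by triangle edges, so $3-j$ is even and $j\in\{1,3\}$. If $j=1$, the two uncovered vertices are adjacent in the triangle and are matched by the unique edge joining them. If $j=3$, no triangle edge is used.

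Next I will define the inverse map. Given a $\{1,3\}$-factor $H$, take all long edges corresponding to $E(H)$. In each triangle $T_v$ with $\deg_H(v)=1$, add the unique triangle edge joining the two vertices not covered by a long edge. The result is a perfect matching of $G_\Delta$, and the case analysis above shows that it is the only perfect matching whose image under $\phi$ is $H$. Hence $\phi$ is injective and surjective, and the equality $M(G_\Delta)=F_{\{1,3\}}(G)$ follows.

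There is no serious obstacle. The one point requiring care is the uniqueness of the triangle completion: the matching inside each triangle is forced by the set of long edges incident to it. This holds because the triangle is a complete graph on three vertices, so any two of its vertices are adjacent. The same argument does not work for decorations with larger vertex sets, where several internal completions can exist.
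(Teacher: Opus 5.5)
Your proof is correct. You restrict a perfect matching of $G_\Delta$ to its long edges; the parity of $3-j$ forces $j\in\{1,3\}$, and the completion inside each triangle is unique, which is the standard bijection. The paper gives no proof of its own here (it cites Ciucu, Liu and Yang), but it uses the same collapsing argument implicitly in Lemma \ref{2 x 1,3 factors} and in defining the map $t$ for decorated graphs, so your approach matches the intended one.
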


In the following lemma, we describe how the number of $\{1,3\}$-factors doubles once we replace one of the vertices of a cubic graph with a triangle.

\begin{lem} \label{2 x 1,3 factors}
    Let $G$ be a cubic graph and $v \in V(G)$. Then
    \[F_{\{1,3\}}(G_{\Delta_v}) = 2 F_{\{1,3\}}(G),\]
    where $G_{\Delta_v}$ is the graph obtained from $G$ by replacing the vertex $v$ with a triangle.
\end{lem}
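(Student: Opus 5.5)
The plan is to compare $\{1,3\}$-factors of the two graphs through their restriction to the edges outside the new triangle. Write $e_1,e_2,e_3$ for the edges of $G$ incident to $v$, and $a_1,a_2,a_3$ for the vertices of the triangle in $G_{\Delta_v}$, where $a_i$ is incident to $e_i$. The edge set of $G_{\Delta_v}$ is $E(G)$ together with the three triangle edges $a_1a_2,a_2a_3,a_3a_1$, with $e_i$ reattached at $a_i$. Since both graphs are cubic, a spanning subgraph is a $\{1,3\}$-factor exactly when every vertex has odd degree. This parity reformulation is the key simplification: each vertex has degree at most $3$, so "odd" and "degree $1$ or $3$" coincide.

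First, I would define the restriction map $\Phi$. It sends a $\{1,3\}$-factor $H'$ of $G_{\Delta_v}$ to the subgraph $H = H' \setminus \{a_1a_2,a_2a_3,a_3a_1\}$, read as a subgraph of $G$. Every vertex $w \neq v$ has the same incident edges in both graphs, so its degree in $H$ equals its degree in $H'$ and is odd. At $v$, the degree of $v$ in $H$ is the number of indices $i$ with $e_i \in H'$. Summing the degrees of $a_1,a_2,a_3$ in $H'$ gives an odd total, since it is a sum of three odd numbers. This total equals $|S| + 2|T|$, where $S = H' \cap \{e_1,e_2,e_3\}$ and $T$ is the set of triangle edges in $H'$. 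Hence $|S|$ is odd, so $v$ has degree $1$ or $3$ in $H$, and $\Phi$ is well defined.

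Second, I would count the fibres of $\Phi$. Fix a $\{1,3\}$-factor $H$ of $G$ and let $S$ be its set of edges at $v$, with $|S|$ odd. A preimage of $H$ is determined by a choice of $T \subseteq \{a_1a_2,a_2a_3,a_3a_1\}$. The requirement is $\deg_T(a_i) \equiv 1 + [e_i \in S] \pmod 2$ for each $i$. The target parities sum to $3 + |S|$, which is even, so the prescribed vector lies in the image of the boundary map of the triangle over $\mathbb{F}_2$. That image is exactly the set of vectors with even coordinate sum. The kernel of the boundary map is the cycle space of the triangle, which is $\{\emptyset, \text{triangle}\}$ and has size $2$. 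Hence there are exactly $2$ choices of $T$, namely some $T_0$ and its complement in the triangle.

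Concretely, if $S = \{e_i\}$ then $T$ is either the edge opposite $a_i$ or the two edges at $a_i$; if $|S| = 3$ then $T$ is empty or the whole triangle. Every fibre of $\Phi$ therefore has size exactly $2$, and $F_{\{1,3\}}(G_{\Delta_v}) = 2F_{\{1,3\}}(G)$ follows. The only point needing care is the well-definedness of $\Phi$ at $v$, that is, ruling out $|S| \in \{0,2\}$, and the parity-sum argument above handles it.
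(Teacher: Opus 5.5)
Your proposal is correct and follows the paper's argument. Both proofs count, for each $\{1,3\}$-factor of $G$, exactly two completions on the triangle, and your concrete list for $|S|=1$ and $|S|=3$ is precisely the paper's case analysis. Your parity argument ruling out $|S|\in\{0,2\}$ spells out the converse direction, which the paper only asserts, and the $\mathbb{F}_2$ fibre count is just a uniform packaging of the same two cases.
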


\begin{proof}
    Consider a $\{1,3\}$-factor $F$ in $G$. There are two cases, depending on the degree of $v$ in $F$.
    
    If $v$ has degree $1$, then there are exactly two ways to obtain a $\{1,3\}$-factor in $G_{\Delta_v}$. We can do this either by taking an edge on the remaining two vertices of the triangle or by taking its complement within the triangle. See the left part of Figure \ref{fig:2times13}. If $v$ has degree $3$, then we can take either all of the edges in the triangle or none of them. See the right part of Figure \ref{fig:2times13}.

    Thus, every $\{1,3\}$-factor of $G$ gives rise to exactly two $\{1,3\}$-factors of $G_{\Delta_v}$, and conversely every $\{1,3\}$-factor of $G_{\Delta_v}$ arises in this way. 
\end{proof}

\begin{figure}[ht]
    \centering
    \includegraphics[width=0.8\textwidth]{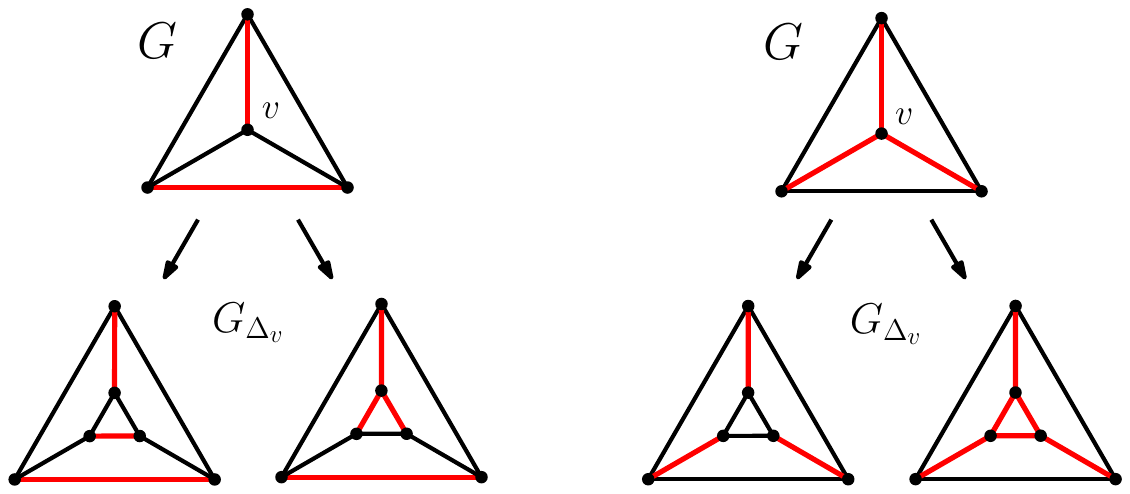}
    \caption{\label{fig:2times13} The graph $G_{\Delta_v}$ obtained by replacing the vertex $v$ of $G$ with a triangle. On the left (resp. right), the vertex $v$ has degree $1$ (resp. $3$) in a $\{1,3\}$-factor $F$. For each case, the two possibilities for $\{1,3\}$-factors in $G_{\Delta_v}$ are shown.}
\end{figure}

\begin{rem} \label{bijection 1-3 2-reg}
    Let $G$ be a cubic graph. There is a bijection between the set of $\{1,3\}$-factors and the set of $\{0,2\}$-factors of $G$. This can be seen by taking the complement of a $\{1,3\}$-factor in $G$ and noticing that every vertex must have degree either $0$ or $2$ in the resulting subgraph.
\end{rem}

We now state the following well-known result in graph theory. For completeness, we also include a short proof here.

\begin{lem} [~\cite{Biggs}, Chapters 4 and 5] \label{even subgraphs}
    Let $G$ be a graph on $n$ vertices, $m$ edges and $c$ connected components. Then the number of even subgraphs of $G$ is equal to $2^{m-n+c}$.
\end{lem}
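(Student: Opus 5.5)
The plan is to identify even subgraphs with the cycle space of $G$ over $\mathbb{F}_2$ and compute its dimension. Identify a spanning subgraph (edge subset) $H \subseteq E(G)$ with its indicator vector in $\mathbb{F}_2^{E}$. Let $\partial : \mathbb{F}_2^{E} \to \mathbb{F}_2^{V}$ be the boundary map sending an edge $uv$ to $u+v$. For $H \subseteq E$, the coordinate of $\partial H$ at a vertex $w$ is the parity of $\deg_H(w)$. Hence the even subgraphs are exactly the elements of $\ker \partial$. This is a vector space over $\mathbb{F}_2$, so it suffices to show that
\[
\dim \ker \partial = m-n+c .
\]

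By rank–nullity, $\dim\ker\partial = m - \operatorname{rank}\partial$, so it remains to prove $\operatorname{rank}\partial = n-c$. I will determine the image of $\partial$ directly. Every $\partial H$ has an even number of $1$'s in each connected component, since the degree sum over a component is even. So the image lies in the subspace
\[
W=\Bigl\{x\in\mathbb{F}_2^V : \textstyle\sum_{w\in K} x_w = 0 \text{ for every component } K\Bigr\}.
\]
The $c$ constraints defining $W$ involve disjoint sets of coordinates and are therefore independent, which gives $\dim W = n-c$. Conversely, for any two vertices $u,w$ in the same component, take a path $P$ from $u$ to $w$; then $\partial P = u+w$. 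The vectors $u+w$ of this form span $W$, so $\operatorname{im}\partial = W$ and $\operatorname{rank}\partial = n-c$.

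Alternatively, one can argue with a spanning forest $T$, which has $n-c$ edges. For each of the $m-n+c$ non-forest edges $e$, the set $T+e$ contains a unique cycle $C_e$. An even subgraph is determined by which non-forest edges it contains: the symmetric difference of two even subgraphs that agree off $T$ is an even subgraph contained in a forest, and so it is empty. Every choice of non-forest edges is realized by the corresponding symmetric difference of the cycles $C_e$. This gives the count $2^{m-n+c}$ directly and also matches the fundamental-cycle basis mentioned in the introduction.

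There is no serious obstacle here. The only step needing care is the surjectivity onto $W$, which reduces to the connectivity of each component via paths. I would present the spanning-forest argument as the main proof, since it is self-contained.
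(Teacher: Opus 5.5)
Your proposal is correct, and the spanning-forest argument you plan to present as the main proof is the paper's proof. You also spell out why the fundamental cycles form a basis, which the paper only asserts: an even subgraph is determined by its non-forest edges, since an even subgraph inside a forest is empty, and every subset of non-forest edges is realized, since each $C_e$ contains exactly one non-forest edge, namely $e$. Your boundary-map argument is also valid: rank--nullity together with identifying $\operatorname{im}\partial$ as the vectors of even weight on each component gives the dimension $m-n+c$ without choosing a forest.
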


\begin{proof}
    We choose a spanning forest $F$ such that it contains a spanning tree of every connected component of $G$. There are $n-c$ edges in $F$. Therefore, there are $m-n+c$ edges not contained in the spanning forest. Each edge not contained in $F$, when added to $F$, creates a unique cycle. These cycles then form the basis of the cycle space of $G$, whose dimension is $m-n+c$. As each even subgraph of $G$ is obtained as a symmetric difference over any subset of basis cycles, we obtain that there are $2^{m-n+c}$ even subgraphs of $G$.
\end{proof}

Using the fact that for cubic graphs $m = \frac{3}{2}n$, we obtain the following statement.

\begin{cor} \label{num of 2-reg}
    Let $G$ be a cubic graph. Then, the number of $\{1,3\}$-factors of $G$ is equal to $2^{\frac{n}{2}+c}$, where $n$ is the number of vertices and $c$ is the number of connected components in $G$.
\end{cor}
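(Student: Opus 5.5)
The plan is to combine Remark~\ref{bijection 1-3 2-reg} with Lemma~\ref{even subgraphs}. By Remark~\ref{bijection 1-3 2-reg}, taking complements gives a bijection between $\{1,3\}$-factors and $\{0,2\}$-factors of $G$. So it suffices to count the $\{0,2\}$-factors.

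The key observation is that in a cubic graph the $\{0,2\}$-factors are exactly the even subgraphs, viewed as spanning edge sets. An even subgraph gives every vertex even degree, and since every degree is at most $3$, the only possible values are $0$ and $2$. Conversely, every $\{0,2\}$-factor is clearly even. Here we identify a subgraph with its edge set and regard it as spanning, which is the same convention under which Lemma~\ref{even subgraphs} counts elements of the cycle space.

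Lemma~\ref{even subgraphs} then gives $2^{m-n+c}$ such subgraphs. Since $G$ is cubic, the handshake lemma gives $2m = 3n$, so $m - n + c = \tfrac{n}{2} + c$. Hence $F_{\{1,3\}}(G) = 2^{\frac{n}{2}+c}$.

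The only point needing care is the identification of even subgraphs with $\{0,2\}$-factors. One must be sure the count in Lemma~\ref{even subgraphs} is over edge subsets, including the empty set, so that isolated vertices are allowed and the correspondence is exact. Beyond this, the argument is routine arithmetic.
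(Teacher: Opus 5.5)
Your proposal is correct and follows the paper's own route. The paper obtains the corollary from Lemma~\ref{even subgraphs} together with $m=\frac{3}{2}n$, and uses the complement bijection of Remark~\ref{bijection 1-3 2-reg}. You also spell out a step the paper leaves implicit: in a cubic graph, the even spanning subgraphs are exactly the $\{0,2\}$-factors.
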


We conclude this section by stating the result of Ciucu, Liu, and Yang~\cite{CiucuLiuYang}.

\begin{thm} \label{perf_matchings of G_Delta}~\cite{CiucuLiuYang}
    Let $G_\Delta$ be a truncated cubic graph. Then
    \[M(G_\Delta) = 2^{\frac{n_\Delta}{6}+c},\]
    where $n_\Delta$ denotes the number of vertices of $G_\Delta$ and $c$ denotes the number of its connected components.
\end{thm}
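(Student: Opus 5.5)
The plan is to chain the three earlier results and then translate the parameters of $G$ into those of $G_\Delta$. Write $G_\Delta$ as the truncation of a cubic graph $G$ with $n$ vertices, $m = \frac{3}{2}n$ edges and $c$ connected components. By Lemma \ref{perf matchings and 1-3 factors}, $M(G_\Delta) = F_{\{1,3\}}(G)$. By Corollary \ref{num of 2-reg}, $F_{\{1,3\}}(G) = 2^{\frac{n}{2}+c}$. It then only remains to express $n$ and $c$ through $G_\Delta$.

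For the vertex count, each vertex of $G$ is replaced by a triangle, so $n_\Delta = 3n$, and hence $\frac{n}{2} = \frac{n_\Delta}{6}$. For the components, replacing a vertex by a connected gadget (a triangle) whose three vertices carry the three original incident edges does not change connectivity. Two vertices of $G_\Delta$ lie in the same component exactly when the corresponding vertices of $G$ do, since a path in $G$ lifts to a path in $G_\Delta$ by traversing triangle edges, and a path in $G_\Delta$ projects to a walk in $G$. So $G_\Delta$ has the same number $c$ of connected components as $G$. Substituting these gives $M(G_\Delta) = 2^{\frac{n_\Delta}{6}+c}$.

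For completeness, I would briefly recall why Corollary \ref{num of 2-reg} holds. By Remark \ref{bijection 1-3 2-reg}, complementation gives a bijection between $\{1,3\}$-factors and $\{0,2\}$-factors of $G$. In a cubic graph, an even subgraph has all degrees in $\{0,2\}$, since degrees are at most $3$ and even. So $\{0,2\}$-factors are exactly the even subgraphs, and Lemma \ref{even subgraphs} counts them as $2^{m-n+c} = 2^{n/2+c}$.

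There is no real obstacle here. The only point needing care is the component count: one must check that truncation preserves the number of components. This uses that each triangle is connected and inherits all incident edges of its vertex. A minor subtlety is that $G$ may have loops or multiple edges. I would assume $G$ is simple, or note that the bijection of Lemma \ref{perf matchings and 1-3 factors} is used as stated.
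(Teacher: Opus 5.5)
Your proposal is correct and follows the paper's proof exactly. You combine the bijection of Lemma~\ref{perf matchings and 1-3 factors} with the count $F_{\{1,3\}}(G)=2^{n/2+c}$ from Remark~\ref{bijection 1-3 2-reg} and Corollary~\ref{num of 2-reg}, then substitute $n_\Delta = 3n$; your explicit check that truncation preserves the number of connected components is a useful addition, since the paper silently identifies the $c$ of $G$ (used in the corollary) with the $c$ of $G_\Delta$ (used in the statement).
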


\begin{proof}
    It suffices to put together the results from above. Using Remark \ref{bijection 1-3 2-reg} and Corollary \ref{num of 2-reg} we get that $F_{\{1,3\}}(G) = 2^{\frac{n}{2}+c}$. Then, by Lemma \ref{perf matchings and 1-3 factors}, we find that
    \[M(G_\Delta) = F_{\{1,3\}}(G) = 2^{\frac{n}{2}+c} = 2^{\frac{n_\Delta}{6}+c},\]
    where we used $n_\Delta = 3 n$.
\end{proof}

They used a different approach in~\cite{CiucuLiuYang} to prove Theorem \ref{perf_matchings of G_Delta}, namely the high temperature expansion of the Ising model. However, in the lecture notes by Ciucu~\cite{CiucuNotes10}, the method shown above was used to obtain the same result.

This exact formula, in fact, shows that any truncated cubic graph has an exponential number of perfect matchings. The Lovász-Plummer conjecture (1986)~\cite{LovaszPlummer} states that every cubic bridgeless graph (no edge disconnects the graph) has an exponential number of perfect matchings. This conjecture was first proven in the planar case~\cite{ChudnovskySeymour} and later in full generality~\cite{EKKKN}.

A similar construction to truncation was studied in~\cite{CygPilSkrek}, where they looked at the Klee graphs, which are defined as graphs that can be obtained from $K_4$ by recursively replacing a vertex with a triangle. For these graphs, they provide a lower bound on the number of perfect matchings. 

\subsection{Sampling algorithm and partition function}

We discuss how this approach to counting the number of perfect matchings in truncated cubic graphs can be used to generate a uniform perfect matching in linear time. Then we also go beyond the uniform case by fixing certain weights of the graph to be equal to $1$, while the remaining weights can take any positive real values.

Elkies, Kuperberg, Larsen, and Propp~\cite{ElkKupLarPropp} initiated the study of domino tilings of the Aztec diamond. They found a sampling algorithm that generates a uniform domino tiling. Since then, studying the Aztec diamond has become one of the most widely studied examples in the dimer model. This family of graphs is bipartite and planar. In this section, we present an algorithm for sampling uniform perfect matchings in non-bipartite graphs, including various non-planar graphs. This may create a new framework to explore similar questions to those previously asked about the Aztec diamond.

We proceed to explain the algorithm and use it to describe the partition function of truncated cubic graphs.

\subsubsection*{\textbf{Algorithm.}} Let $G$ be a cubic graph and denote by $G_\Delta$ its truncated cubic graph. Following the lines of the proof of Theorem \ref{perf_matchings of G_Delta}, we start by choosing a spanning forest $F$ such that it contains a spanning tree of every connected component of $G$. This can be done in linear time for cubic graphs, using, for example, the breadth-first search algorithm~\cite{Algorithms}. Then, every edge $e \in E(G) \setminus E(F)$ creates a unique cycle in $G$. This creates a cycle basis \[\mathcal{C} = \{C_1, C_2, \dots, C_k\}\] of the graph $G$, such that $k = \frac{n}{2}+c$, $n = |V(G)|$, and $c$ the number of its connected components. 

Then, the algorithm simply consists of choosing a subset $S \subseteq \mathcal{C}$ such that for all $1 \leq i \leq k$ we choose independently,
\[\mathbb{P}(C_i \in S) = \frac{1}{2}.\]
In other words, we are choosing a subset $S \subseteq \mathcal{C}$ uniformly at random among all the $2^k$ such subsets.

By taking the symmetric difference of the cycles in $S$, we obtain a uniform $\{0,2\}$-factor of $G$, which can be mapped using Remark \ref{bijection 1-3 2-reg} to a uniform $\{1,3\}$-factor in $G$. Using the bijection from Lemma \ref{perf matchings and 1-3 factors}, we finally obtain a uniform perfect matching on the truncated graph $G_\Delta$.

We can now apply this algorithm to obtain a specific partition function for this family of graphs.

\subsubsection*{\textbf{Partition function.}} Let $\mathcal{C} = \{C_1, C_2, \dots, C_k\}$ be a cycle basis of a cubic graph $G$, where $k = \frac{n}{2}+c$. Then we can define a weight function $\mathrm{wt}: \mathcal{C} \rightarrow \mathbb{R}_+$. We denote by $g: 2^{\mathcal{C}} \rightarrow \mathcal{M}(G_\Delta)$ the bijection between $\{0,2\}$-factors of $G$ and perfect matchings of $G_\Delta$.
Instead of considering a uniform $\{0,2\}$-factor as before, we can use the weight function on the cycles to define the following probability measure. 

\begin{defi}
    Let $G$ be a cubic graph, and let $S \subseteq \mathcal{C}$. Then, for every $1 \leq i \leq k$ we choose independently,
    \[\mathbb{P}(S) = \frac{\prod_{C \in S}\mathrm{wt}(C)}{Z_2(G)},\]
    where $Z_2(G)$ is the partition function for $\{0,2\}$-factors of $G$.
\end{defi}

\begin{prop}
    Let $G$ be a cubic graph. Then,
    \[Z_2(G) = \prod_{i = 1}^k (1 + \mathrm{wt}(C_i)).\]
    Moreover, there exists a weight function $\nu: \mathcal{M}(G_\Delta) \rightarrow \mathbb{R}_{+}$ such that \[Z_2(G) = Z(G_\Delta) = \sum_{M \in \mathcal{M}(G_\Delta)} \nu(M).\]
\end{prop}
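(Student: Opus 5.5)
The first identity is the normalization of the product measure on subsets of the basis. The second comes from pushing the weight of each subset forward along the bijection $g$.

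\emph{Step 1: the formula for $Z_2(G)$.} I will set
\[Z_2(G) = \sum_{S \subseteq \mathcal{C}} \prod_{C \in S} \mathrm{wt}(C),\]
which is the normalizing constant making $\mathbb{P}$ a probability measure on $2^{\mathcal{C}}$. Expanding the product $\prod_{i=1}^k (1+\mathrm{wt}(C_i))$ means choosing, for each $i$, either the term $1$ (so $C_i \notin S$) or the term $\mathrm{wt}(C_i)$ (so $C_i \in S$). This gives exactly one monomial $\prod_{C\in S}\mathrm{wt}(C)$ for each subset $S$, so the two expressions agree. This also justifies the phrase ``choose independently'': under $\mathbb{P}$, each $C_i$ lies in $S$ independently with probability $\mathrm{wt}(C_i)/(1+\mathrm{wt}(C_i))$.

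To interpret $Z_2(G)$ as a partition function of $\{0,2\}$-factors, I need that the map from $2^{\mathcal{C}}$ to even subgraphs, $S \mapsto \triangle_{C\in S} C$, is a bijection. Surjectivity holds because $\mathcal{C}$ spans the cycle space. Injectivity holds because the basis is linearly independent over $\mathbb{F}_2$: each $C_i$ contains its own non-tree edge and no other $C_j$ does. By the counting in Lemma \ref{even subgraphs}, both sides have $2^k$ elements. Since $G$ is cubic, even subgraphs are exactly the $\{0,2\}$-factors. So $\prod_{C\in S}\mathrm{wt}(C)$ is a well-defined weight on $\{0,2\}$-factors, and $Z_2(G)$ is the sum of these weights.

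\emph{Step 2: the weight function $\nu$.} The map $g$ is the composition of three bijections:
\begin{itemize}
\item[(i)] $S \mapsto \triangle_{C\in S}C$, from subsets of $\mathcal{C}$ to $\{0,2\}$-factors (Step 1);
\item[(ii)] complementation, from $\{0,2\}$-factors to $\{1,3\}$-factors (Remark \ref{bijection 1-3 2-reg});
\item[(iii)] the bijection of Lemma \ref{perf matchings and 1-3 factors}, from $\{1,3\}$-factors of $G$ to $\mathcal{M}(G_\Delta)$.
\end{itemize}
I then define
\[\nu(M) = \prod_{C \in g^{-1}(M)} \mathrm{wt}(C),\]
which is positive since every $\mathrm{wt}(C)>0$. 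Because $g$ is a bijection, reindexing the sum defining $Z_2(G)$ by $M = g(S)$ gives $\sum_{M\in\mathcal{M}(G_\Delta)}\nu(M) = Z_2(G)$. I then declare this sum to be $Z(G_\Delta)$.

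\emph{Main obstacle.} The only real subtlety is making sure $g$ is a genuine bijection, so that $\nu$ is well defined. That is the linear-independence argument in Step 1 combined with the two cited bijections. It would also be worth noting that $\nu$ is not, in general, a product of edge weights of $G_\Delta$; the statement only asks for some weight function on matchings, so this is not needed.
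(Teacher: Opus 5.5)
Your proof is correct and follows the paper's argument. You expand $\prod_{i=1}^k(1+\mathrm{wt}(C_i))$ as a sum of monomials indexed by subsets $S\subseteq\mathcal{C}$, then transport these weights along the bijection $g$ via $\nu(M)=\prod_{C\in g^{-1}(M)}\mathrm{wt}(C)$. Your check that $S\mapsto\triangle_{C\in S}C$ is a bijection onto the $\{0,2\}$-factors makes explicit what the paper takes for granted.

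Your closing aside is not needed. For a fundamental cycle basis, $\nu$ is in fact a product of edge weights on $G_\Delta$, so the qualifier ``in general'' is essential.
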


\begin{proof}
    We write \[Z_2(G) = \sum_{S \subseteq \mathcal{C}} \mathrm{wt}(S) = \sum_{S \subseteq \mathcal{C}} \prod_{C_i \in S}\mathrm{wt}(C_i) = \prod_{i = 1}^k (1 + \mathrm{wt}(C_i)).\]
    Using the bijection $g$, we obtain a particular dimer partition function on $G_\Delta$, where $\nu: \mathcal{M}(G_\Delta) \rightarrow \mathbb{R}_{+}$, such that \[\nu(M) = \mathrm{wt}(g^{-1}(M)) = \mathrm{wt}(S) = \prod_{1 \leq i \leq k, C_i \in S}\mathrm{wt}(C_i).\]

\end{proof}

We finish this section by providing some examples of graphs that may be interesting to study.

\begin{ex} (Klee graphs)
    Recall that a graph is called a Klee graph if it is obtained by replacing one of the vertices with a triangle from another Klee graph or from $K_4$. Similarly to what is shown in Lemma \ref{2 x 1,3 factors}, we can describe the partition function for $\{1,3\}$-factors of Klee graphs, where at each step we introduce a new triangle such that two of its edges have a weight of $1$, and the third can have an arbitrary positive real weight $w$. It is easy to check that their partition functions can be related by:
    \[Z_{\{1,3\}}(G_{\Delta_v}) = (1+w) Z_{\{1,3\}}(G),\]
    where $G_{\Delta_v}$ is obtained from $G$ by replacing the vertex $v$ with a triangle. 
\end{ex}

Another possible variant would be to study \emph{random Klee graphs}, which are defined as Klee graphs for which, at each step, we decide uniformly at random which vertex should be replaced by a triangle.

\begin{ex} (Truncating the tetrahedron)
We can also consider another geometrically interesting object, which is, in fact, a Klee graph itself. We can define a sequence of graphs:
\[T^{n+1} = T^n_{\Delta},\]
where $n \geq 0$, and $T^0$ is the tetrahedron. This can be viewed in $3$-dimensional space as starting with a tetrahedron and truncating its corners at each step. The first step of this procedure is shown in Figure \ref{fig:trunctetra}. 
\end{ex}

\subsection{Partial order on perfect matchings} \label{poset_perf_match}

To study the flip dynamics, we first introduce a partial order on perfect matchings. We begin by recalling the bijection between the $\{0,2\}$-factors of $G$ and the perfect matchings of $G_\Delta$, established by Lemma \ref{perf matchings and 1-3 factors} and Remark \ref{bijection 1-3 2-reg}. 

We denote this bijection by $g: 2^\mathcal{C} \rightarrow \mathcal{M}(G_\Delta)$, where $\mathcal{C} = \{C_1, \dots, C_k\}$ denotes a cycle basis for the cubic graph $G$. The set $2^\mathcal{C}$ is naturally equipped with the partial order given by inclusion. In Figure \ref{fig:partialorderloops}, we can see the desired poset on the tetrahedron.

\begin{figure}[ht]
    \centering
    \includegraphics[width=0.5\textwidth]{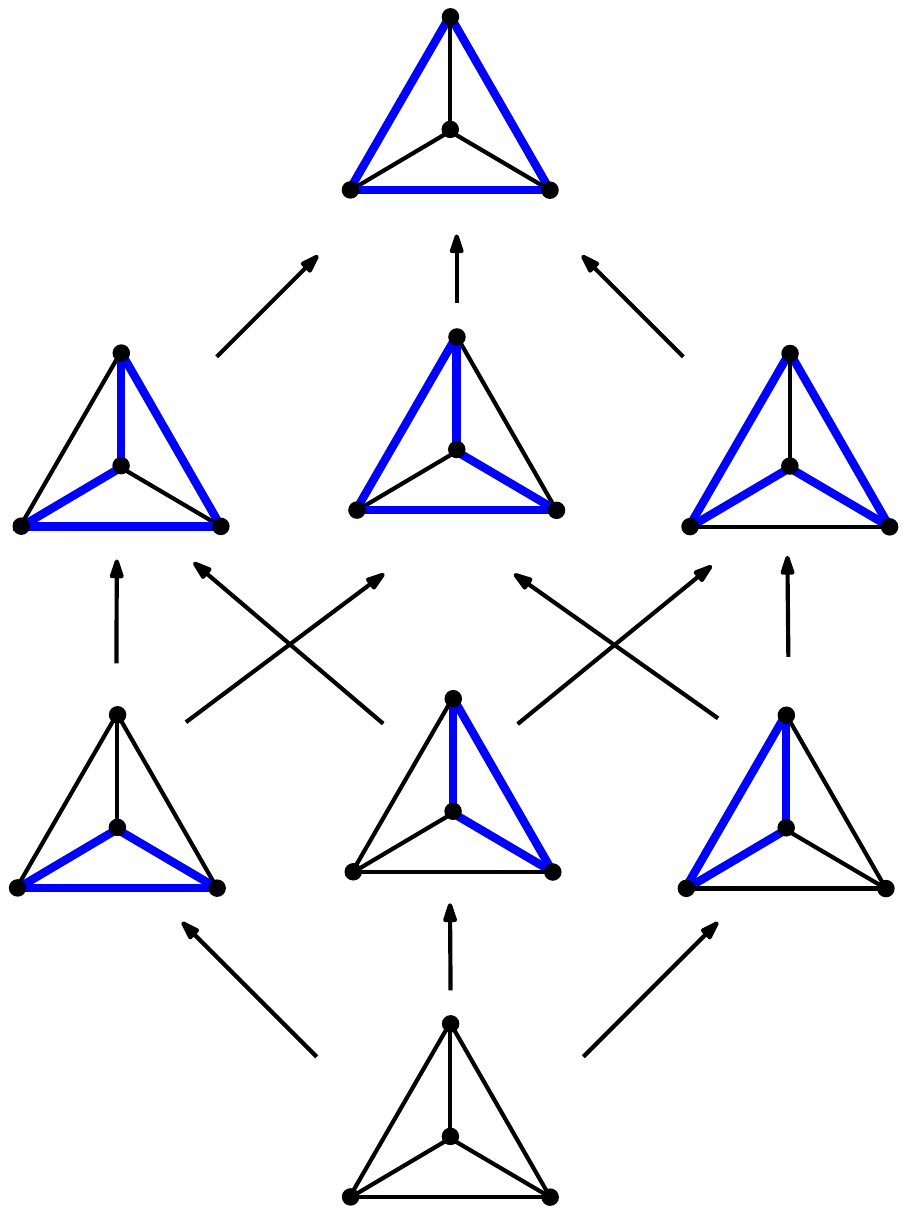}
    \caption{\label{fig:partialorderloops} The partial order on $2$-regular subgraphs of the tetrahedron.}
\end{figure}

Using the bijection $g$, we can define the poset on $\mathcal{M}(G_\Delta)$. That is, \[M_1 = g(S_1) \leq g(S_2) = M_2\] if and only if $S_1 \subseteq S_2$, where $M_1, M_2 \in \mathcal{M}(G_\Delta)$ and $S_1, S_2 \in 2^\mathcal{C}$. Figure \ref{fig:partialorderperfmatchings} illustrates this partial order for the truncated tetrahedron.

\begin{figure}[ht]
    \centering
    \includegraphics[width=0.55\textwidth]{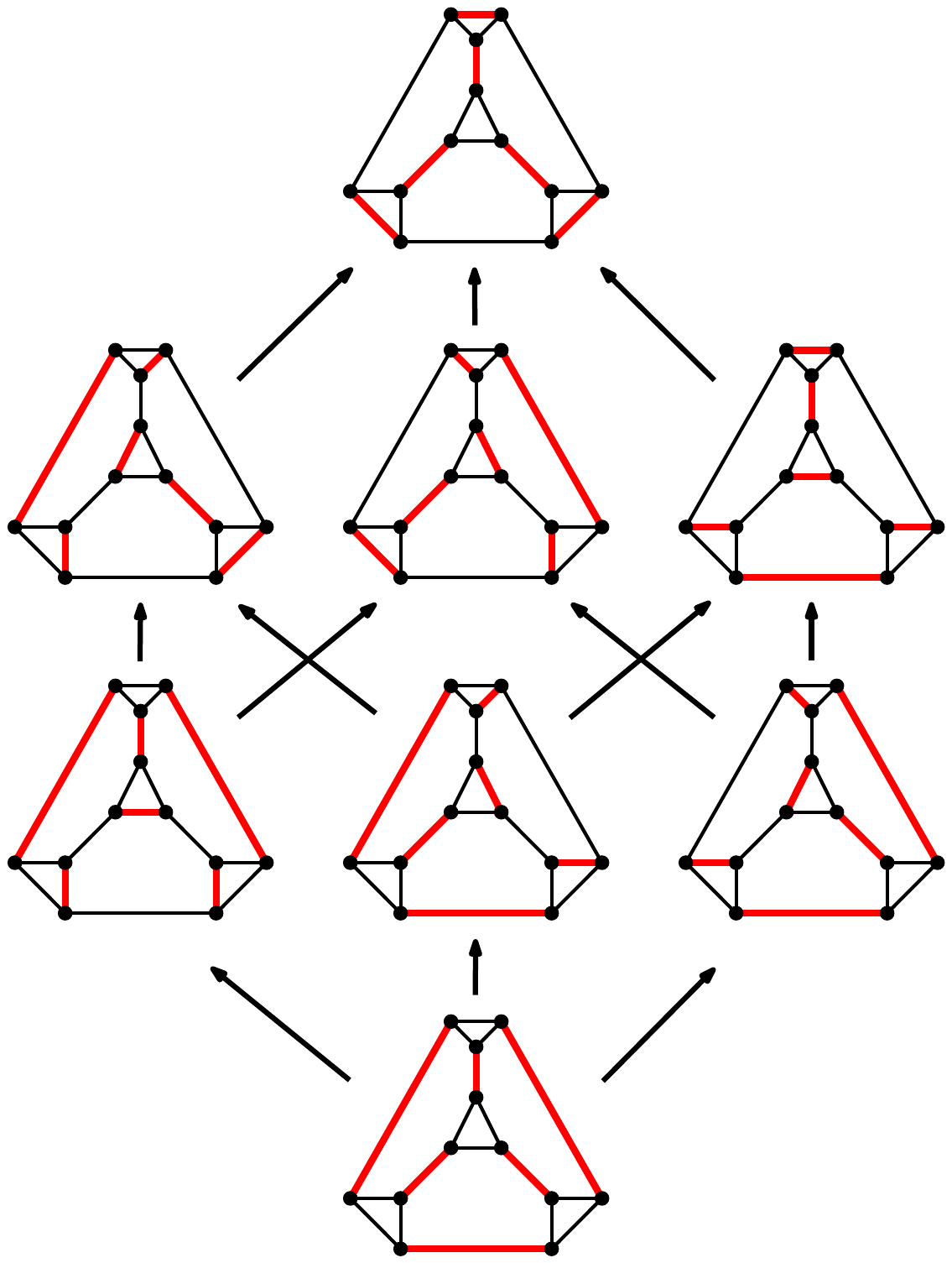}
    \caption{\label{fig:partialorderperfmatchings} The partial order on perfect matchings of the truncated tetrahedron.}
\end{figure}

\subsubsection*{\textbf{The non-planar case. }}

If a cubic graph $G$ is planar, after fixing its embedding in the plane, the natural choice for a cycle basis is the set of its inner faces. If $G$ is non-planar, the situation is more subtle, as there are many choices for its cycle basis. As we have already described, any spanning forest with a single tree per connected component of $G$ gives a cycle basis. In some cases, we might find that some of the cycles in the basis are rather long, possibly visiting almost all the vertices of $G$. This is the case that we would like to avoid, as it only provides a trivial upper bound, which can also be obtained by simply superimposing two perfect matchings.

The problem of finding a \emph{minimum cycle basis} has been extensively studied in the computer science literature. They include edge-weighted and unweighted versions, where different algorithms have been found to efficiently obtain a minimum cycle basis of a given graph. See, for example~\cite{MehlhornMichail}. In this paper, we do not make use of these algorithms, we only point out their existence.

\section{Flip dynamics for decorated graphs} \label{decorated flip dynamics}

In this section, we study the flip dynamics on a family of graphs that we call \emph{decorated graphs}. These graphs are obtained by replacing each vertex of a graph with another connected graph called a \emph{decoration}. We denote a decoration by $(H, D)$, where $H$ is a simple connected graph and $D \subseteq V(H)$ is the set of \emph{distinguished} vertices. 

\begin{defi}
    We say that a decoration $(H, D)$ is \emph{acceptable} if, for every even subset of vertices $S \subseteq D$, the graph $H \setminus S$ contains a perfect matching.
\end{defi}

If $D = V(H)$ and we take $S = D$ in the definition, then $H \setminus S$ has a perfect matching, which is an empty set. From this, we also observe that $H$ should have an even number of vertices. On the other hand, $D$ can also be odd for $(H,D)$ to be acceptable.

Some of the most natural examples of acceptable decorations are complete graphs $K_{2d}$. As a non-example, consider the cycle on $4$ edges, since by removing two non-adjacent vertices, the rest of the graph does not contain a perfect matching.

We denote by $\mathcal{F}_{\text{dec}}$ the family of all acceptable decorations.

\begin{lem} \label{H+e acceptable}
    Let $(H,D)$ be an acceptable decoration.
    \begin{enumerate}
        \item If $|D| > 2$, then $H$ is a non-bipartite graph.
        \item if $v_1, v_2 \in V(H)$ such that $v_1v_2 \notin E(H)$, then $(H', D)$, where $H$ is obtained by adding the edge $v_1v_2$ to $H$, is an acceptable decoration.
    \end{enumerate}
\end{lem}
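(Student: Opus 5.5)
Both parts follow directly from the definition of an acceptable decoration, so the plan is to verify each one straight from it.

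For part 1, I will argue by contradiction. Suppose $|D| > 2$ and $H$ is bipartite with parts $A$ and $B$. Acceptability with $S = \emptyset$ gives a perfect matching of $H$, so $|A| = |B|$. Since $|D| \geq 3$, by pigeonhole two distinguished vertices $x, y$ lie in the same part, say $A$. The set $S = \{x,y\}$ is an even subset of $D$. Then $H \setminus S$ is bipartite with parts of sizes $|A|-2$ and $|B|$. These sizes are unequal, so $H \setminus S$ has no perfect matching, contradicting acceptability.

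The one point to check is that we really have two distinguished vertices on the same side. This holds because three vertices cannot be split among two parts with at most one in each.

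Part 2 is monotonicity of perfect matchings under adding edges. First, $H'$ is still simple and connected, since we only add a new edge between existing vertices. Its vertex set and distinguished set $D$ are unchanged. Now take any even $S \subseteq D$. By acceptability of $(H,D)$, the graph $H \setminus S$ has a perfect matching $M$. Because $H' \setminus S$ has the same vertex set and contains $H \setminus S$ as a spanning subgraph, $M$ is also a perfect matching of $H' \setminus S$. Hence $(H', D)$ is acceptable.

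Neither step is a real obstacle; the only care needed is the parity and part-size count in part 1.
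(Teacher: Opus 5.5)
Your proof is correct and follows the same approach as the paper: a pigeonhole argument on two distinguished vertices in the same color class for part 1, and the fact that adding an edge preserves perfect matchings for part 2. Your use of $S=\emptyset$ to show that the two color classes have equal size makes explicit a step the paper leaves implicit.
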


\begin{proof}
    The first can be seen by supposing the opposite, that $H$ is bipartite, and coloring $n$ of its vertices black and the remaining $n$ vertices white. Since $|D| > 2$, we can find in $D$ at least two vertices of the same color. If we remove them, the subgraph of $F$ does not contain a perfect matching.

    The second statement follows because if $H \setminus S$ contains a perfect matching for an even subset $S \subseteq D$, then $H' \setminus S$ also contains a perfect matching.
\end{proof}

\begin{defi} \label{deco_graph}
    Let $G$ be a graph. Let $\mathcal{D} : V(G) \to \mathcal{F}_{\mathrm{dec}}$ be a function assigning to each vertex $v \in V(G)$ an acceptable decoration $\mathcal{D}(v) = (H_v, D_v)$, where $H_v$ is a graph and $D_v \subseteq V(H_v)$ satisfies $|D_v| = \deg_G(v)$. 

    We define $\mathcal{F}_{(G,\mathcal{D})}$ to be the family of \emph{decorated graphs} $G_{\mathcal{D}}$ constructed as follows: for each $v \in V(G)$, replace $v$ with the graph $H_v$, and for each edge $e = uv \in E(G)$, add an edge $\phi(e) = u'v'$, where $u' \in D_u$ and $v' \in D_v$. We require that the map $\phi : E(G) \to E(G_{\mathcal{D}})$ has a vertex-disjoint image, that is, no two edges $\phi(e)$ share a common endpoint.
\end{defi}

We write $G_{\mathcal{D}} \in \mathcal{F}_{ (G, \mathcal{D})}$ when considering decorated graphs for an arbitrary choice of $\phi$ satisfying the above conditions. In case we want a particular map, we denote the corresponding graph by $G^{\phi}_{\mathcal{D}}$.

Note that $\mathcal{F}_{ (G, \mathcal{D})}$ is indeed a family of graphs because there are usually multiple ways to decorate a vertex, depending on the choice of the cyclic permutation that decides how the distinguished vertices are mapped to the neighbors of a given vertex $v \in V(G)$. Therefore, we consider decorated graphs arising from all possible choices of cyclic permutations.

\begin{defi}
    Let $G_\mathcal{D} \in \mathcal{F}_{ (G, \mathcal{D})}$ and let $\mathcal{C} = \{C_1, \dots, C_k\}$ be a cycle basis of $G$. For a decoration $\mathcal{D}(v)$ at a vertex $v \in V(G)$, a perfect matching in $G_{\mathcal{D}}$ matches an even number of vertices internally and an even number of vertices outside the decoration.
    
    We define the map $t: \mathcal{M}(G_{\mathcal{D}}) \rightarrow 2^\mathcal{C}$ as follows. For every $M \in \mathcal{M}(G_{\mathcal{D}})$, the image $t(M)$ is the even subgraph of $G$ obtained by collapsing all decorations of $G_\mathcal{D}$.
\end{defi}

Recall from Lemma \ref{even subgraphs} that every even subgraph, as we discussed in Section \ref{enumeration}, can be described as a subset of a cycle basis $\mathcal{C}$. 

\begin{lem} \label{t_surj}
    The map $t: \mathcal{M}(G_{\mathcal{D}}) \rightarrow 2^\mathcal{C}$ is surjective. 
\end{lem}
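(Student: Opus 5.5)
Surjectivity means: for every subset $S \subseteq \mathcal{C}$ we must produce a perfect matching $M$ of $G_{\mathcal{D}}$ with $t(M) = S$. The plan is to build $M$ in two layers, first the long edges and then the edges inside each decoration, using only the acceptability of the decorations.

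First, by Lemma \ref{even subgraphs} and the discussion following it, the symmetric difference of the cycles in $S$ is an even subgraph $E_S \subseteq E(G)$. Conversely, every even subgraph determines a unique subset of $\mathcal{C}$, since $\mathcal{C}$ is a basis of the cycle space. So it suffices to construct $M$ whose set of long edges, after collapsing the decorations, is exactly $E_S$. We therefore put into $M$ the edges $\phi(e)$ for all $e \in E_S$. Because $\phi$ has vertex-disjoint image, these long edges form a matching.

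Next we complete $M$ inside each decoration. Fix $v \in V(G)$ and let $S_v \subseteq D_v$ be the set of distinguished vertices of $H_v$ that are endpoints of the chosen long edges $\phi(e)$, $e \in E_S$. Since $\phi$ is injective on endpoints, $|S_v| = \deg_{E_S}(v)$, and this is even because $E_S$ is an even subgraph. Acceptability of $(H_v, D_v)$ then gives a perfect matching $M_v$ of $H_v \setminus S_v$. We set
\[ M = \{\phi(e) : e \in E_S\} \cup \bigcup_{v \in V(G)} M_v. \]

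It remains to check that $M$ is a perfect matching of $G_{\mathcal{D}}$ and that $t(M) = S$. Every vertex of $H_v$ lies either in $S_v$, where it is covered exactly once by a long edge, or in $V(H_v)\setminus S_v$, where it is covered exactly once by $M_v$. The remaining distinguished vertices, those in $D_v \setminus S_v$, are endpoints of long edges not in $M$ and are covered internally. Hence $M$ is perfect. Collapsing the decorations sends the long edges of $M$ to exactly $E_S$, so $t(M)$ corresponds to $S$.

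The only genuine point to watch is the bookkeeping between $D_v$ and the long edges: the parity claim $|S_v|$ even relies on $\phi$ using distinct endpoints. Beyond that the argument is routine.
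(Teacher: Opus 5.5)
Your proof is correct and follows the same route as the paper: take the long edges $\phi(e)$ for $e$ in the even subgraph determined by $S$. Each decoration then loses an even number of distinguished vertices, because $\phi$ has vertex-disjoint image, so acceptability completes the matching inside each $H_v$. The paper compresses this into a single sentence; your write-up supplies the parity and disjointness bookkeeping that it leaves implicit.
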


\begin{proof}
    This follows from the fact that for every $v \in V(G)$, the decoration $\mathcal{D}(v)$ is acceptable. Therefore, for any choice of an even subgraph in $G$, we can find a perfect matching in $G_{\mathcal{D}}$.
\end{proof}

Similarly to the case of truncated cubic graphs, the map $t$ defines a poset structure on the long edges of $G_{\mathcal{D}}$. We can now proceed to prove the following result about the flip dynamics.

\begin{thm} \label{general_flip_theorem}
    Let $G$ be a graph, and denote by $G_\mathcal{D} \in \mathcal{F}_{ (G, \mathcal{D})}$ the decorated graph. The space of perfect matchings $\mathcal{M}(G_\mathcal{D})$ is flip-connected on the cycles of length at most 
    \[\max_{1 \leq i \leq k}\Big\{\sum_{v \in V(C_i)} |\mathcal{D}(v)|\Big\},\]
    where $\{C_1, \dots, C_k\}$ forms a cycle basis of $G$, and $|\mathcal{D}(v)|$ is the number of vertices of the graph that replaces the vertex $v \in V(G)$ in $G_\mathcal{D}$. 
\end{thm}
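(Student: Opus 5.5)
The plan is to use the map $t:\mathcal{M}(G_\mathcal{D})\to 2^{\mathcal{C}}$ from Lemma \ref{t_surj} and to argue in two stages. First we connect perfect matchings that have the same image under $t$. Then we move between fibers by toggling one basis cycle at a time. Throughout, write $L=\max_i \sum_{v\in V(C_i)}|\mathcal{D}(v)|$.

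\textbf{Step 1 (same fiber).} Let $M,M'$ satisfy $t(M)=t(M')$. Then they use exactly the same long edges, so on each decoration $H_v$ they restrict to perfect matchings of $H_v\setminus S_v$ for the same set $S_v$ of externally matched distinguished vertices. The symmetric difference $M\triangle M'$ is therefore a disjoint union of alternating cycles, each contained in a single $H_v$. Flipping these cycles one by one turns $M$ into $M'$. Each such cycle has length at most $|\mathcal{D}(v)|\le \sum_{u\in V(C_i)}|\mathcal{D}(u)|$ for any basis cycle $C_i$ through $v$. Some care is needed for vertices $v$ lying on no basis cycle, for instance vertices of tree components. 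There one should check that either the decoration is bounded by the same quantity or $H_v$ alone controls the bound. I expect this edge case may require assuming that every vertex lies on some $C_i$, or reading the maximum accordingly.

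\textbf{Step 2 (toggling one basis cycle).} Write $t(M)=S$. We want to move from $M$ to some matching $M'$ with $t(M')=S\triangle\{C_i\}$ using a single flip of length at most $\sum_{v\in V(C_i)}|\mathcal{D}(v)|$. For this, consider how $C_i$ passes through each decoration $H_v$ with $v\in V(C_i)$: it enters and leaves $H_v$ through two distinguished vertices $a_v,b_v$. Toggling $C_i$ changes the externally matched set at $v$ from $S_v$ to $S_v\triangle\{a_v,b_v\}$, which is still even. By acceptability, $H_v\setminus(S_v\triangle\{a_v,b_v\})$ has a perfect matching $N_v$.

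The key move is to use Step 1 first, so that $M$ is modified inside each $H_v$ to a convenient matching $M_v$ of $H_v\setminus S_v$. The aim is that $M_v\triangle N_v$ contains an alternating path from $a_v$ to $b_v$. Concatenating these paths with the long edges of $C_i$ then yields a single alternating cycle $Z$ in $G_\mathcal{D}$. Its length is at most $\sum_{v\in V(C_i)}|\mathcal{D}(v)|$, because it visits each $H_v$ along a simple path. Flipping $Z$ realizes $t\mapsto S\triangle\{C_i\}$.

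Why the path exists is the main technical point. In $M_v\triangle N_v$ every vertex of $H_v$ other than $a_v,b_v$ has degree $0$ or $2$, while $a_v$ and $b_v$ have degree exactly $1$. Indeed, exactly one of $M_v,N_v$ covers each of them internally, since the long edge at each is toggled. Hence the component containing $a_v$ is an alternating path ending at $b_v$. We then take $M_v$ to agree with $N_v$ outside that path, which is possible by replacing $N_v$ by $N_v$ off the path. This makes $Z$ exactly the symmetric difference. By Step 1 we may take $M|_{H_v}=M_v$ using short flips.

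\textbf{Conclusion.} Given arbitrary $M,M'$, write $t(M)\triangle t(M')$ as a set of basis cycles. We toggle them one at a time by Step 2, and then finish with Step 1 inside the fiber of $t(M')$. All flips used have length at most $L$.

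The main obstacle is the bookkeeping in Step 2 when $C_i$ passes through a vertex $v$ whose long edges on $C_i$ are already in, or out of, $S$ in mixed ways. In particular, $C_i$ may use edges that are in $S$ at one end and not at the other. The alternating structure must be checked carefully: edges of $C_i$ in $S$ are currently matched and become unmatched, and vice versa. I would first verify that the degree argument above is insensitive to this.
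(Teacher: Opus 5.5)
Your proposal is correct and follows the paper's argument. Within a fiber of $t$ you use flips inside each decoration, of length at most $|\mathcal{D}(v)|$. For each basis cycle $C_i$ you use one flip along the long edges of $C_i$, joined in each decoration by the $a_v$--$b_v$ path component from Lemma~\ref{unique path}, which gives length at most $\sum_{v\in V(C_i)}|\mathcal{D}(v)|$.

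The edge case you flag is a real defect of the statement, not of your argument. The paper's remark that $|\mathcal{D}(v)|$ lies below the bound silently assumes every vertex lies on some $C_i$, and this can fail. Take a high-degree vertex $v$ all of whose edges are bridges, carrying a Fisher decoration. It is never externally matched, so its two internal matchings differ on a single cycle of length $2\deg(v)$, which breaks the bound. The right fix is what you suggest: either add that hypothesis, or include $\max_v|\mathcal{D}(v)|$ in the bound, as Theorem~\ref{mainthm3} does.
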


In order to prove this theorem, we need the following result.

\begin{lem} \label{unique path}
    Let $G$ be a connected graph and let $u, v \in V(G)$ be such that $G \setminus u$ and $G \setminus v$ both contain a perfect matching. Then there is a unique path from $u$ to $v$ in $M_1 \cup M_2$, where $M_1 \in \mathcal{M}(G \setminus u)$ and $M_2 \in \mathcal{M}(G \setminus v)$.
\end{lem}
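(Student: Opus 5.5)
We will analyse the symmetric difference $M_1 \triangle M_2$ through vertex degrees in the multigraph $M_1 \cup M_2$. Here $M_1$ is a perfect matching of $G \setminus u$ and $M_2$ is a perfect matching of $G \setminus v$, and we read the statement as: $M_1 \triangle M_2$ contains exactly one path, it joins $u$ to $v$, and every other component is an alternating even cycle. We may assume $u \neq v$; if $u = v$ the path is trivial.

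\textbf{Degree count.} Every vertex $w \notin \{u,v\}$ is covered by exactly one edge of $M_1$ and exactly one edge of $M_2$. If these two edges coincide, $w$ is isolated in $M_1 \triangle M_2$; otherwise $w$ has degree $2$ there. The vertex $u$ is not covered by $M_1$ but is covered by exactly one edge of $M_2$, so it has degree exactly $1$ in $M_1 \triangle M_2$. By symmetry, $v$ also has degree exactly $1$.

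\textbf{Components.} Hence $M_1 \triangle M_2$ is a graph with maximum degree $2$ in which exactly two vertices, $u$ and $v$, have degree $1$. Every connected component of such a graph is a path or a cycle. Each path component has two endpoints of degree $1$, and only two vertices have degree $1$. So there is exactly one path component, and its endpoints must be $u$ and $v$. All other components are cycles.

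\textbf{Structure and uniqueness.} Along any component, the edges alternate between $M_1$ and $M_2$, since no vertex lies on two edges of the same matching. Therefore the cycles are even alternating cycles. The path starts at $u$ with an $M_2$-edge and ends at $v$ with an $M_1$-edge, so it has even length. Uniqueness of the path between $u$ and $v$ inside $M_1 \triangle M_2$ follows from maximum degree $2$: starting from $u$, each step of the walk is forced, so only one walk leaves $u$ without repeating vertices, and it must terminate at the other degree-$1$ vertex, namely $v$.

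\textbf{Main obstacle.} The point needing care is interpretation rather than difficulty. In the union $M_1 \cup M_2$ itself, paths from $u$ to $v$ could detour through common edges, so uniqueness genuinely needs the symmetric difference, where shared edges are removed and appear as isolated double edges. Connectivity of $G$ is not actually needed for this argument; it only guarantees the setting used later.
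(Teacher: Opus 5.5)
Your proof is correct and is essentially the paper's argument: the paper also just counts degrees ($2$ for every $w \notin \{u,v\}$ and $1$ for $u$ and $v$, reading $M_1 \cup M_2$ as a multigraph in which shared edges are doubled) and concludes that the only path component joins $u$ to $v$. One correction to your closing remark: edges of $M_1 \cap M_2$ cannot lie on a $u$--$v$ path in the plain union either, because both endpoints of such an edge have no other incident edge in $M_1 \cup M_2$ and neither endpoint can be $u$ or $v$. Passing to $M_1 \triangle M_2$ is therefore harmless but not needed for uniqueness.
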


\begin{proof} 
    The result follows directly by noticing that each vertex $w \in V(G) \setminus \{u, v\}$ has degree two in $M_1 \cup M_2$, while $u$ and $v$ have degree one. Therefore, there is only one path in $M_1 \cup M_2$, and it is exactly the path connecting $u$ and $v$. See Figure \ref{fig:ddsquaregrid} for an example.
\end{proof}

\begin{figure}[ht]
    \centering
    \includegraphics[width=0.4\textwidth]{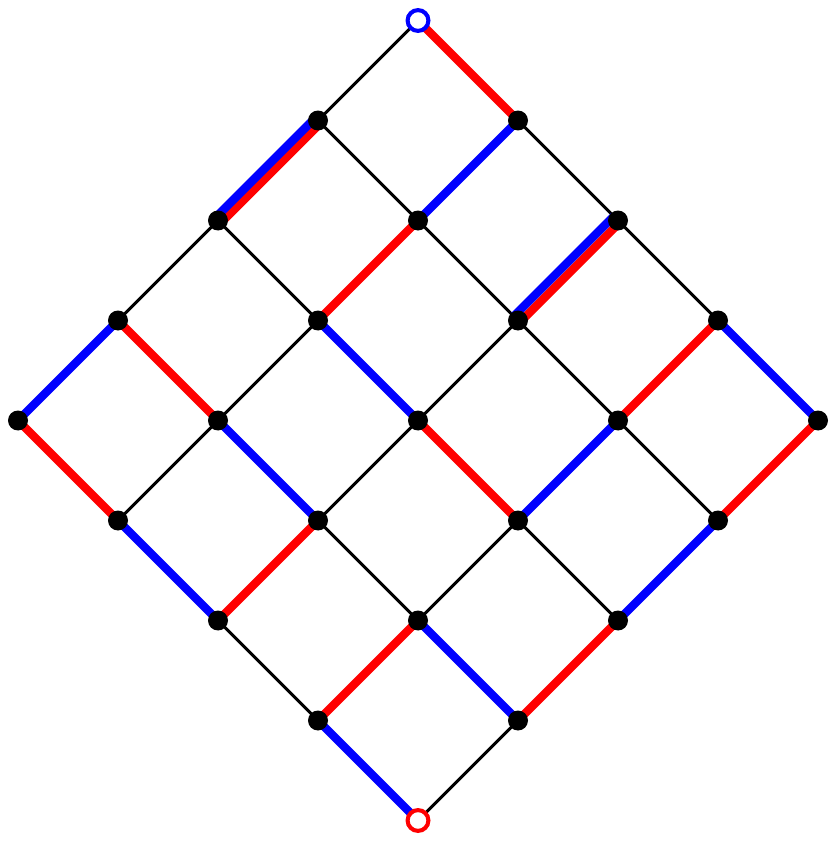}
    \caption{\label{fig:ddsquaregrid} A perfect matching without the top vertex superimposed by a perfect matching without the bottom vertex.}
\end{figure}

\begin{proof} [Proof of Theorem \ref{general_flip_theorem}]
    Let $M \in \mathcal{M}(G_\Delta)$, such that $t(M) = \emptyset$. We want to show that by performing flips, we can get from $M$ to any other perfect matching $M'$. This will be enough because the flip operation is reversible. Such a perfect matching $M$ exists because by Lemma \ref{t_surj} the map $t$ is surjective. This corresponds to the perfect matching obtained by choosing all the long edges in $G_{\mathcal{D}}$ (no edge comes from any of the decorations). 

    To prove this claim, we use the poset structure described in Section \ref{poset_perf_match}. Let $M_1, M_2 \in \mathcal{M}(G_{\mathcal{D}})$ and $S_1, S_2 \in 2^\mathcal{C}$, such that $t(M_1) = S_1$ and $t(M_2) = S_2$. It suffices to show that if $S_1 \subseteq S_2$ and $S_2 \setminus S_1 \subseteq \{C_\}$, for some $C_i \in \mathcal{C}$, then $M_2$ can be obtained from $M_1$ by performing flips on decorations independently and a single flip on the certain cycle of $G_{\mathcal{D}}$. By performing this flip operation consecutively, we can obtain $M'$ from $M$.
    
    If $S_1 = S_2$, then we need to perform the flips independently on each decoration $\mathcal{D}(v)$. We use a trivial upper bound for the length of the cycles involved in these flips, which is obtained by superimposing any two perfect matchings of $\mathcal{D}(v)$ and is given by $|\mathcal{D}(v)|$ (when a cycle visits all the vertices of $\mathcal{D}(v)$. This bound is smaller than the upper bound we want to prove. 

    We assume now that $S_1 \neq S_2$, it suffices to consider the case when $S_2 \setminus S_1 = \{C_i\}$, for some $1 \leq i \leq k$. Let $v \in V(C_i)$ and $\mathcal{D}(v)$ be the corresponding decoration. Note that the long edges in $M_1 \triangle M_2$ are exactly the edges corresponding to $C_i$. Moreover, there are two distinguished vertices of $\mathcal{D}(v)$ that are attached to the two long edges corresponding to the two edges of $C_i$ that pass through $v$. 
    
    By Lemma \ref{unique path}, there is a unique path passing through $\mathcal{D}(v)$ that connects these two vertices. The length of this path is bounded from above by $|\mathcal{D}(v)| - 1$. Thus, the maximum size of a cycle on which a flip needs to be performed to get from $M_1$ to $M_2$ such that $S_2 \setminus S_1 = \{C_i\}$ is equal to \[|V(C_i)| + \sum_{v \in V(C_i)}(|\mathcal{D}(v)|-1) = \sum_{v \in V(C_i)}|\mathcal{D}(v)|,\]
    where $|V(C_i)|$ appears in the sum because, besides the paths through every $\mathcal{D}(v)$, we need to take into account also the edges of $C_i$. By taking the maximum over all $i \in \{1, \dots, k\}$, we obtain the desired upper bound.
\end{proof}

\subsection{Clique-decorated graphs}

Here, we discuss the case when each decoration is assumed to be a complete graph. We assume that the initial graph $G$ has all even degrees. We call such a decoration a clique decoration and denote it by $G_{clique}$.

It is simple to notice that the decoration $(K_d, V(K_d))$, for even $d$, is acceptable, as after deleting an even number of vertices, we obtain a complete graph that always has a perfect matching.

\begin{lem} \label{4-flips}
    Let $K_{2d}$ be a complete graph on $2d$ vertices for $d \geq 2$. Then $\mathcal{M}(K_{2d})$ is flip-connected on the cycles of length $4$.
\end{lem}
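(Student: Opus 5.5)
We argue by induction on the number of edges in which two perfect matchings differ. It suffices to show the following: for any two distinct perfect matchings $M, M'$ of $K_{2d}$, there is a flip on a $4$-cycle transforming $M$ into a perfect matching $M''$ with $|M'' \triangle M'| < |M \triangle M'|$. Since flips are reversible and $M \triangle M'$ is finite, iterating this reaches $M'$ from $M$.

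First consider the superposition $M \cup M'$. The symmetric difference $M \triangle M'$ is a disjoint union of even alternating cycles of length at least $4$. Pick one such cycle and write it as $C = (a_1 a_2 \dots a_{2m})$, where $a_1a_2, a_3a_4, \dots \in M$ and $a_2a_3, \dots, a_{2m}a_1 \in M'$. If $m = 2$, then $C$ is itself a $4$-cycle alternating with respect to $M$. Flipping it replaces the $M$-edges of $C$ by the $M'$-edges, which removes all four edges from the symmetric difference.

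If $m \ge 3$, we use that $K_{2d}$ is complete, so the chord $a_1a_4$ exists. The cycle $(a_1 a_2 a_3 a_4)$ has $a_1a_2, a_3a_4 \in M$, and $a_2a_3, a_4a_1$ are edges of the graph. It is therefore an $M$-alternating $4$-cycle. Flipping it yields $M'' = M \setminus\{a_1a_2, a_3a_4\} \cup \{a_2a_3, a_1a_4\}$. Now $a_2a_3 \in M'$, so the pair $a_2, a_3$ is matched as in $M'$ and leaves the symmetric difference. The remaining part $M'' \triangle M'$ restricted to $a_1, a_4, \dots, a_{2m}$ is the alternating cycle $(a_1 a_4 a_5 \dots a_{2m})$ of length $2m-2$, and the other cycles are unchanged. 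Hence $|M''\triangle M'| = |M \triangle M'| - 2$.

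The only point needing care is confirming that the shortened structure is again an alternating cycle and that the symmetric difference strictly decreases. The edge-count bookkeeping above handles this: we remove two $M$-edges and one $M'$-edge from the cycle and add the chord $a_1a_4$, for a net decrease of $2$.

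This induction uses only flips of length exactly $4$, which requires $2d \ge 4$, that is, $d \ge 2$. That is precisely the hypothesis of the statement.
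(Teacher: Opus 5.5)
Your proof is correct and uses the same key step as the paper: flipping the $4$-cycle $(a_1a_2a_3a_4)$ through the chord $a_1a_4$, which splits off the doubled edge $a_2a_3$ and shortens the alternating cycle by two. The only difference is bookkeeping. The paper inducts on $d$ and treats separately the case where $M_1 \cup M_2$ has several cycles, applying the hypothesis to the sub-clique on each cycle. Your induction on $|M \triangle M'|$ handles all cycles uniformly and avoids that case split.
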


\begin{proof}
    We proceed by induction on $d$. If $d = 2$, then the result follows, as two distinct perfect matchings of $K_4$ always form a cycle of length $4$. 

    We suppose the statement is true for $d \geq 2$. Then, let $M_1, M_2 \in \mathcal{M}(K_{2d+2})$. If $M_1 \cup M_2$ forms more than one cycle in $K_{2d+2}$, then we are done by the inductive hypothesis, because each of them will have a length of at most $2d$. 
    
    It remains to consider the case when $M_1 \cup M_2$ consists of a single cycle. Let $v_1, v_2, v_3, v_4 \in V(K_{2d+2})$ be such that $v_1v_2, v_3v_4 \in M_1$ and $v_2v_3 \in M_2$. We then perform a flip on the cycle defined by these four vertices to replace the edges of $M_1$ with the edges $v_2v_3$ and $v_4v_1$. Therefore, after performing this flip, we obtain in $K_{2d+2}$ a double edge on $v_2v_3$ and a cycle of length $2d$. See Figure \ref{fig:K6flips}. Using the inductive hypothesis, the perfect matchings forming the cycle of length $2d$ are related by flips on cycles of length $4$.
\end{proof}

\begin{figure}[ht]
    \centering
    \includegraphics[width=0.8\textwidth]{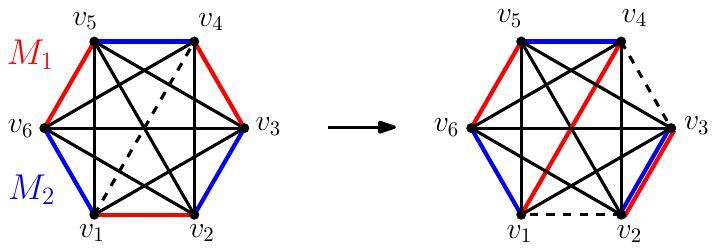}
    \caption{\label{fig:K6flips} A cycle of length $6$ deformed into a cycle of length $4$ and a double edge on $v_2v_3$.}
\end{figure}

\begin{proof} [Proof of Theorem \ref{mainthm2}]
    We first assume that the vertices of $G$ have even degrees. 
    
    Let $M_1, M_2 \in \mathcal{M}(G_{clique})$. Let $S_1$ and $S_2$ be the corresponding even subgraphs in $G$. If $S_1 = S_2$, then it suffices to perform flips on each clique decoration separately. By Lemma \ref{4-flips}, on a complete graph with at least $4$ vertices, it suffices to use the cycles of length $4$.

    Now, we assume that $S_1 \neq S_2$. Similarly to Theorem \ref{general_flip_theorem}, it is enough to assume that $S_2 \setminus S_1 = \{C_i\}$, for some $1 \leq i \leq k$. Let $v \in V(C_i)$ and denote by $u_1$ and $u_2$ its neighbors in $C_i$. We denote by $K_v$ the complete graph on $\deg(v)$ vertices, which replaces $v$ in $G_{clique}$. There are two possibilities:
    \begin{enumerate}
        \item $u_1v, u_2v \in E(S_1)$ or $u_1v, u_2v \notin E(S_1)$.
        This corresponds to the left side of Figure \ref{fig:clique2cases}. It suffices to complete the perfect matching $M_1$ into a cycle by extending it with a single edge in $K_v$ (denoted by $e_1$ in Figure \ref{fig:clique2cases}, left). One can obtain a perfect matching containing this edge by applying to $M_2$ a sequence of flips internal to $K_v$.
        \item $u_1v \in E(S_1)$ and $u_2v \notin E(S_1)$ (or the opposite).
        In this case, there are exactly two long edges in $M_1 \triangle M_2$ such that exactly one of their end vertices is in the decoration $K_v$. See the right side of Figure \ref{fig:clique2cases}. We can complete the cycle by connecting these two vertices with a path of length $2$ that visits a vertex that is not visited by a double edge in $M_1 \cup M_2$. Such a vertex always exists because we assumed that $G$ has all even degrees, and $M_1 \triangle M_2$ visits an odd number of vertices of $K_v$. 
    \end{enumerate}
    
    The length of the longest cycle on which a flip is performed is maximized if every $v \in V(C_i)$ satisfies the second condition. In that case, the length of the cycle is bounded by $3|C_i|$, since to each edge of $C_i$, we need to assign a $2$-path in $K_v$, for all $v \in V(C_i)$.

    The case when the vertices of $G$ have odd degrees is completely analogous. Instead of even subgraphs, we need to take odd subgraphs $S_1$ and $S_2$ that correspond to perfect matchings $M_1$ and $M_2$. Then we need to repeat the same case analysis as before.
\end{proof}

\begin{figure}[ht]
    \centering
    \includegraphics[width=0.75\textwidth]{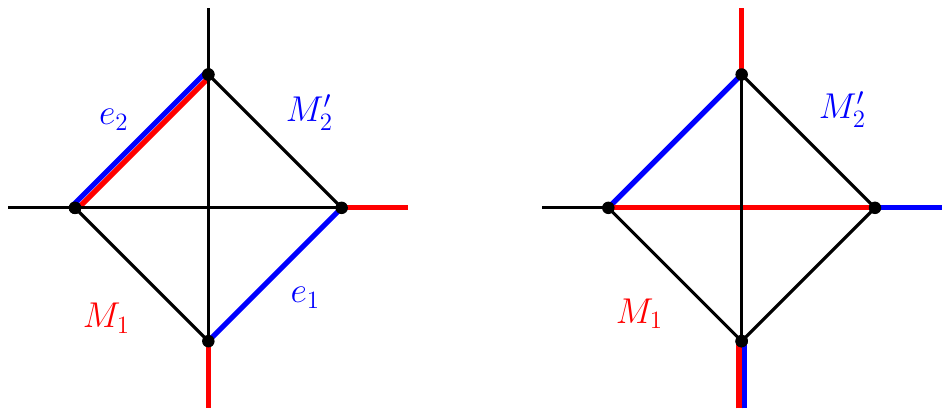}
    \caption{\label{fig:clique2cases} Two possible cases for performing flips depending on $E(C_i) \cap E(S_1)$. The perfect matching $M'_2$ is obtained from $M_2$, by a sequence of flips within $K_v$.}
\end{figure}

Observe that Theorem \ref{mainthm2} provides a stronger upper bound than Theorem \ref{general_flip_theorem} because the lengths of the cycles depend only on the cycle basis and not on the size of the inserted decorations.

\subsection{Fisher graphs}

As another important example of an acceptable decoration, we study the well-known Fisher decoration.  
See Figure \ref{fig:fishdeg5} for an example. Introduced by Fisher~\cite{Fisher66}, this decoration was used to establish a correspondence between the Ising model on a graph $G$ and the dimer model on the graph $F_G$, obtained by applying the Fisher decoration to every vertex of $G$.

It is straightforward to check that the Fisher decoration is acceptable. Indeed, after removing an even number of distinguished vertices, we always obtain a graph that contains exactly two perfect matchings. Using this observation and the same ideas as in the proof of Theorem \ref{perf_matchings of G_Delta}, we can count the number of perfect matchings of Fisher graphs.

\begin{lem}
    Let $G$ be a connected graph on $m$ edges and minimum degree at least $3$, and let $F_G$ be its Fisher graph. Then \[M(F_G) = 2^{m+1}.\]
\end{lem}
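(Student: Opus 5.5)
The plan is to count perfect matchings of $F_G$ through the map $t$ that sends a perfect matching to its set of long edges, exactly as in the proof of Theorem \ref{perf_matchings of G_Delta}. The formula should then follow from a product over fibres:
\[
M(F_G) = \sum_{S \text{ even}} \prod_{v\in V(G)} \#\{\text{perfect matchings of the Fisher decoration at } v \text{ with the distinguished vertices used by } S \text{ removed}\}.
\]
The two ingredients are that the image of $t$ is exactly the set of even subgraphs of $G$, and that every local factor equals $2$.

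\textbf{Step 1 (image of $t$).} The Fisher decoration at $v$ has $2\deg(v)$ vertices, which is even. In any perfect matching the vertices matched internally come in pairs, so an even number of the decoration's vertices are matched by long edges. Hence the long edges of a perfect matching, viewed in $G$, form an even subgraph. Conversely, every even subgraph arises, by acceptability of the Fisher decoration and Lemma \ref{t_surj}. Since $G$ is connected, Lemma \ref{even subgraphs} gives $2^{m-n+1}$ even subgraphs.

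\textbf{Step 2 (local count).} For a fixed even subgraph $S$, a perfect matching in the fibre $t^{-1}(S)$ is determined by independent choices in each decoration. In the decoration at $v$, one must perfectly match the graph obtained by deleting the distinguished vertices $u_{2i-1}$ whose long edge lies in $S$. I claim this graph has exactly two perfect matchings, whatever the even deleted set is; the paper asserts this, and I would verify it as follows.

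Each remaining odd vertex $u_{2i-1}$ has degree $2$, with neighbours $u_{2i-2}$ and $u_{2i}$ only, so it must be matched to one of them. Suppose first that no vertex is deleted. The $d$ odd vertices must then occupy all $d$ even vertices injectively. This forces all of them to choose their left neighbour or all to choose their right neighbour, which gives $2$ matchings.

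If a nonempty even set of $2r$ odd vertices is deleted, the remaining odd vertices fall into maximal runs between deleted positions. Each run of odd vertices together with its adjacent even vertices forms a path, and the run can be matched by choosing either the left or the right orientation. Whichever orientation is chosen, exactly one even vertex per gap is left unmatched. These leftover even vertices must then be matched through the chords $u_{2i}u_{2i+2}$, and $\deg(v)\ge 3$ ensures the chords form a genuine $d$-cycle on the even vertices. I expect the orientations to be forced to alternate consistently around the cycle, so that exactly two global configurations survive.

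\textbf{Step 3 (multiply).} With Steps 1 and 2 in hand,
\[
M(F_G) = 2^{m-n+1}\cdot 2^{n} = 2^{m+1}.
\]

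The main obstacle is the local count in Step 2 when $2r>0$ vertices are deleted. The required fact is that the chord-matching constraint on the leftover even vertices admits exactly two compatible choices of run orientations. I would check this by direct case analysis, starting with $r=1$ and then treating each gap between consecutive deleted positions by induction around the cycle.
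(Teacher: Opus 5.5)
Your route is the paper's. You fibre $\mathcal{M}(F_G)$ over even subgraphs, get $2^{m-n+1}$ fibres from Lemma~\ref{even subgraphs}, and multiply by a local factor of $2$ per decoration; the paper simply asserts that local factor.

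The thing to repair is your sketch of that factor in Step~2, which rests on a false intermediate claim. Let $u_{2i-1}$ and $u_{2j-1}$ be consecutive deleted vertices. The surviving run then lies on the path $u_{2i}u_{2i+1}\cdots u_{2j-2}$ of the outer cycle, which has $k$ odd and $k+1$ even vertices. Its odd vertices can be matched into it in $k+1$ ways, not two. Any even vertex $u_{2l}$ of the path can be the leftover, with the odd vertices before it matched backwards and those after it matched forwards. So ``left or right orientation'' undercounts the local options, and the alternation you expect cannot be derived from that picture.

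What eliminates the mixed configurations is the chord constraint.
\begin{enumerate}
\item Each gap leaves exactly one even vertex unmatched, and the chords only join cyclically consecutive even vertices.
\item A leftover strictly inside its gap has both chord-neighbours in the same gap, already matched to odd vertices, so it cannot be covered.
\item Hence each leftover is the first or last even vertex of its gap. It is matched, via the chord $u_{2j-2}u_{2j}$ straddling a deleted vertex $u_{2j-1}$, to the leftover of an adjacent gap.
\item The $2r$ gaps are therefore paired by a perfect matching of their cyclic sequence. There are exactly two such pairings, and each one forces every leftover and hence every run.
\end{enumerate}
When $2r=2$, the two pairings use the chords on either side of the two deleted vertices. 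These are distinct edges precisely because $d\ge 3$; for $d=2$ they coincide, and removing both distinguished vertices leaves a single matching.

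Your $r=0$ argument is fine. With this fix, Step~3 gives $2^{m-n+1}\cdot 2^n = 2^{m+1}$.
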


\begin{proof}
    By Lemma \ref{even subgraphs}, there are $2^{m-n+1}$ even subgraphs in $G$, where $n$ is the number of vertices of $G$. Each even subgraph of $G$ gives rise to $2^n$ perfect matchings of $F_G$, since every subgraph of a Fisher decoration obtained by removing an even number of distinguished vertices has exactly two perfect matchings. In total, we get \[M(F_G) = 2^{m-n+1}2^n = 2^{m+1}.\]
\end{proof}

Using the approach we described in Theorem \ref{general_flip_theorem}, as well as a particular structure of Fisher graphs, we can provide more precise upper bounds for the cycles that ensure flip-connectivity. 

\begin{proof} [Proof of Theorem \ref{mainthm3}]
    Let $M_1, M_2 \in \mathcal{M}(F_G)$, we can distinguish two cases.
    \begin{enumerate}
        \item The corresponding even subgraphs of $M_1$ and $M_2$ in $G$ are the same.

        In this case, to go from $M_1$ to $M_2$, it suffices to perform flips on cycles that are contained in every Fisher decoration. The length of these cycles is at most 
        \[\max_{1 \leq j \leq n} 2\deg(v_j),\]
        and the bound $2 \deg(v_j)$ is attained when the vertex $v_j$ is not included in the corresponding even subgraph.

        \item The even subgraphs $S_1$ (resp. $S_2$) associated with $M_1$ (resp. $M_2$) are not the same.

        As in Theorem \ref{general_flip_theorem}, it suffices to consider the case when $S_2 \setminus S_1 = \{F_i\}$, for some $1 \leq i \leq k$. Analogously to Theorem \ref{mainthm2}, we can have two possibilities. Let $v \in V(F_i)$  and let $u_1, u_2$ be the neighbors of $v$ in $F_i$.

        \begin{enumerate} [(a)]
            \item Suppose that either $u_1v, u_2v \in E(S_1)$ or $u_1v, u_2v \notin E(S_1)$. 
            
            By the planarity of $G$, and hence of $F_G$, the two distinguished vertices in the decoration of $v$ appear consecutively in the cyclic order. Then, by taking a $3$-path to visit the Fisher decoration associated with the vertex $v$, we get a cycle that extends the perfect matching $M_1$. The remaining edges of the $3$-path come from the perfect matching $M'_2$, which is obtained by applying the flips on $M_2$ within the Fisher decoration (their length is at most $2\deg(v)$). See the left side of Figure \ref{fig:fisher2cases}. 
            \item Suppose that $u_1v \in E(S_1)$ and $u_2v \notin E(S_1)$ (or vice versa). 
            
            The perfect matching $M_1$ can be extended using a $2$-path visiting the Fisher decoration, which can be seen in Figure \ref{fig:fisher2cases}. Similarly to the previous case, the perfect matching $M'_2$ is obtained from $M_2$ by flips within the Fisher decoration of $v$.
        \end{enumerate}

        In order to conclude the proof, we notice that the maximum length of a cycle is obtained if every vertex in $V(F_i)$ satisfies the condition (a), which means that every edge of $F_i$ is extended with a $3$-path in the corresponding Fisher decoration. In total, this gives a cycle whose length is at most $4 |F_i|$.
    \end{enumerate}
\end{proof}

\begin{figure}[ht]
    \centering
    \includegraphics[width=0.75\textwidth]{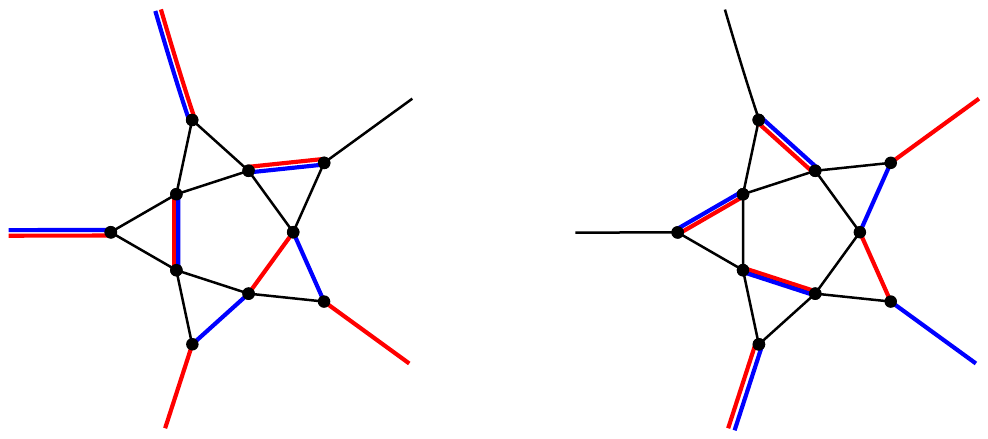}
    \caption{\label{fig:fisher2cases} Two possible cases for performing flips on Fisher decoration. The cycle on the left visits three edges of the decoration, while the cycle on the right visits two edges.}
\end{figure}

\begin{rem}
    By Lemma \ref{H+e acceptable}, we can add any missing edges to Fisher decorations, and they will still be acceptable decorations. Moreover, the same upper bound as in Theorem \ref{mainthm3} holds. In fact, adding new edges can only improve the upper bounds.
\end{rem}

We now briefly discuss the non-planar case. Recall that when we defined the Fisher graphs, we first fixed a drawing of $G$, then replaced every vertex with a Fisher decoration.

\begin{cor}
    Let $G$ be a non-planar graph on $n$ vertices, and denote by $F_G$ its Fisher graph. Then $\mathcal{M}(F_G)$ is flip-connected on the cycles of length at most
    \[\max_{1 \leq i \leq k} \left\{\sum_{v \in V(C_i)} (\deg(v)+2) \right\},\] where $\{C_1, \dots, C_k\}$ denotes a cycle basis of $G$.
\end{cor}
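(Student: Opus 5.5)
The plan is to run the argument of Theorem \ref{general_flip_theorem} with the cycle basis $\{C_1,\dots,C_k\}$ of $G$, improving the estimate of the path inside each decoration in the same way as in the proof of Theorem \ref{mainthm3}. The only change is that, without planarity, the two distinguished vertices used by $C_i$ at a vertex $v$ need no longer be consecutive in the cyclic order. First, fix $M_1,M_2\in\mathcal{M}(F_G)$ with even subgraphs $S_1,S_2$. As in Theorem \ref{general_flip_theorem}, using the poset structure and Lemma \ref{t_surj}, it suffices to treat two situations: $S_1=S_2$, and $S_2\setminus S_1=\{C_i\}$ for a single basis cycle.

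\textbf{Case $S_1=S_2$.} The two matchings differ only inside decorations. Each Fisher decoration with an even set of distinguished vertices removed has exactly two perfect matchings. Their union is a single alternating cycle of length at most $2\deg(v)$, since it lives on the inner $d$-cycle together with the attached pendant structure. Since $v$ lies on some basis cycle, $2\deg(v)\le \sum_{w\in V(C_i)}(\deg(w)+2)$ for that $C_i$: the cycle has at least three vertices, each contributing at least $\deg(w)+2\ge 3$. If $v$ lies on no cycle, it is a tree-like part; there are then no alternatives beyond the two internal matchings, so we still need the bound $2\deg(v)$. This is where one must check the claimed maximum dominates. I would argue that a vertex in no cycle contributes a fixed even subgraph restriction, and handle it by noting that $2\deg(v)\le \deg(v)+2$ fails in general. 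So I would instead restrict to the relevant regime or observe that $G$ has minimum degree at least $3$ in every cycle-containing component. This step needs care.

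\textbf{Case $S_2\setminus S_1=\{C_i\}$.} For each $v\in V(C_i)$ with neighbours $u_1,u_2$ on $C_i$, let $a,b$ be the two distinguished vertices $v_{2p-1},v_{2q-1}$ carrying the long edges $\phi(u_1v),\phi(vu_2)$. As in Theorem \ref{mainthm3}, first apply flips inside the decoration of $v$ to $M_2$, each of length at most $2\deg(v)$, to obtain $M_2'$. We choose $M_2'$ so that $M_1\triangle M_2'$ restricted to the decoration is a path from $a$ to $b$, as guaranteed by Lemma \ref{unique path}. The key estimate is the length of this path. It leaves $a$ by its pendant edge to $v_{2p}$, travels along the inner cycle $v_2v_4\cdots v_{2d}$ from $v_{2p}$ to $v_{2q}$, possibly interleaving edges $v_{2j}v_{2j\pm1}$ of the decoration, and finally reaches $b$. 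By choosing the shorter direction around the inner cycle, and using that the path alternates between $M_1$ and $M_2'$, I expect to bound its length by $\deg(v)+1$. Adding one long edge per vertex of $C_i$ then gives the total
\[
\sum_{v\in V(C_i)}\bigl(\deg(v)+1\bigr)+|V(C_i)|=\sum_{v\in V(C_i)}(\deg(v)+2).
\]

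\textbf{Main obstacle.} The hard step is the path-length estimate $\deg(v)+1$ inside one Fisher decoration for nonconsecutive $a,b$. This requires verifying that the alternating path can be taken to avoid detours through the pendant vertices $v_{2j-1}$ with $j\ne p,q$; such detours would double its length. I would first try to pick $M_2'$ so that on the chosen arc of the inner cycle it agrees with $M_1$ except along the arc itself. The unique-perfect-matching structure of each side, once $a$ or $b$ is removed, should force the path to run straight along the arc.
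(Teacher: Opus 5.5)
Your overall scheme matches the paper's: rerun the proof of Theorem~\ref{general_flip_theorem}, bounding the path through each decoration by $\deg(v)+1$ instead of $|\mathcal{D}(v)|-1$. However, the key estimate is left unproved, and the route you propose for it would fail.

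\textbf{The path estimate.} The odd vertices are not pendant: $v_{2j-1}$ sits on the outer $2\deg(v)$-cycle between $v_{2j-2}$ and $v_{2j}$. Suppose such a vertex lies strictly inside your arc and is matched inside the decoration. Then both of its even neighbours change partner, so it must change partner too, and it is forced onto the alternating path. The path therefore cannot be made to run straight along the inner cycle. It takes a chord $v_{2j-2}v_{2j}$ only past odd vertices matched outside the decoration.

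The missing observation, which is the paper's, is that no avoidance is needed. Each candidate alternating path has length at most that of one of the two arcs of the outer $2\deg(v)$-cycle between $a$ and $b$, plus one edge for parity. These two arcs have total length $2\deg(v)$, so choosing the shorter gives at most $\deg(v)+1$ with all detours included.

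\textbf{The case $S_1=S_2$.} The step you flag here is a genuine gap, and the same issue affects the internal flips you use to produce $M_2'$. Your inequality does not follow. A basis cycle through $v$ only guarantees $\sum_{w\in V(C_i)}(\deg(w)+2)\ge \deg(v)+10$.

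The paper's proof is silent on this point. It improves only the paths through decorations. It never compares the internal flips of length up to $2\deg(v)$, inherited from Theorem~\ref{general_flip_theorem}, with the new bound.

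This gap cannot be closed, because the statement fails as written. Here is a counterexample.
\begin{enumerate}[(i)]
\item Let $v$ have degree $d\ge 14$ and lie on a triangle $vab$ with $\deg(a)=\deg(b)=2$. Take $va,vb$ consecutive around $v$, carried by $v_1,v_3$.
\item Attach $d-2$ pendant copies of $K_5$ to $v$ by bridges. With triangle bases the claimed bound is $d+10$.
\item No even subgraph uses a bridge, so $v_5,v_7,\dots,v_{2d-1}$ are always matched inside the decoration.
\item Say $v_{2i}$ is matched to the left if its partner is $v_{2i-1}$ or $v_{2i-2}$, and to the right otherwise. An internally matched odd vertex forces its two even neighbours to the same side. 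Hence $v_4,v_6,\dots,v_{2d}$ always lie on a common side.
\item Take the two perfect matchings with no long edges that differ only on the outer cycle of the decoration of $v$. One matches $v_6$ to $v_5$, the other to $v_7$.
\item Any flip sequence joining them contains a flip that moves $v_6$ to the other side. Such a flip moves all of $v_4,\dots,v_{2d}$ and re-matches all of $v_5,\dots,v_{2d-1}$. Its length is therefore at least $2d-3>d+10$.
\end{enumerate}

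So the bound needs the extra term $\max_j 2\deg(v_j)$, exactly as in Theorem~\ref{mainthm3}. With that term added, your outline together with the arc count above does prove the corrected statement.
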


\begin{proof}
    If we apply Theorem \ref{general_flip_theorem} directly, then for each cycle $C_i$, the largest cycle on which a flip may be performed has length at most \[\sum_{v \in V(C_i)} 2\deg(v).\]
    This bound can be improved further. For any two distinguished vertices in a Fisher decoration of $v$, there are two internally edge-disjoint paths $P_1$ and $P_2$ joining them such that $|E(P_1)| + |E(P_2)| \leq 2\deg(v)$. Consequently, \[\min \{|E(P_1)|, |E(P_2)|\} \leq \deg(v).\] As in Figure \ref{fig:fisher2cases}, depending on the parity of the chosen path, we may need to extend it by one additional edge. Therefore, together with $|V(C_i)|$ edges from the original graph, we obtain
    \[|V(C_i)| + \sum_{v \in V(C_i)} (\deg(v)+1) = \sum_{v \in V(C_i)} (\deg(v)+2),\]
    and the result follows.
\end{proof}

\subsection*{Acknowledgements}

I am grateful to my PhD advisors, Cédric Boutillier and Sanjay Ramassamy, for their support and many valuable suggestions. I would also like to thank Seok Hyun Byun for pointing out the references~\cite{CiucuNotes10, CiucuLiuYang}, as well as Ivailo Hartarsky and Nedialko Bradinoff for insightful discussions.

\bibliographystyle{plainurl}
\bibliography{references}

@book {LovaszPlummer,
    AUTHOR = {Lov\'asz, L\'aszl\'o{} and Plummer, Michael D.},
     TITLE = {Matching theory},
      NOTE = {Corrected reprint of the 1986 original},
 PUBLISHER = {AMS Chelsea Publishing, Providence, RI},
      YEAR = {2009},
     PAGES = {xxxiv+554},
      ISBN = {978-0-8218-4759-6},
   MRCLASS = {90C10 (05B35 05C70 90C05)},
  MRNUMBER = {2536865},
}

@article {EKKKN,
    AUTHOR = {Esperet, Louis and Kardo\v{s}, Franti\v{s}ek and King, Andrew D.
              and Kr\'al, Daniel and Norine, Serguei},
     TITLE = {Exponentially many perfect matchings in cubic graphs},
   JOURNAL = {Adv. Math.},
  FJOURNAL = {Advances in Mathematics},
    VOLUME = {227},
      YEAR = {2011},
    NUMBER = {4},
     PAGES = {1646--1664},
      ISSN = {0001-8708,1090-2082},
   MRCLASS = {05C70 (05C30)},
  MRNUMBER = {2799808},
MRREVIEWER = {Vahan\ V.\ Mkrtchyan},
}

@article {ChudnovskySeymour,
    AUTHOR = {Chudnovsky, Maria and Seymour, Paul},
     TITLE = {Perfect matchings in planar cubic graphs},
   JOURNAL = {Combinatorica},
  FJOURNAL = {Combinatorica. An International Journal on Combinatorics and
              the Theory of Computing},
    VOLUME = {32},
      YEAR = {2012},
    NUMBER = {4},
     PAGES = {403--424},
      ISSN = {0209-9683,1439-6912},
   MRCLASS = {05C70 (05C10)},
  MRNUMBER = {2965284},
MRREVIEWER = {Stephan\ G.\ Wagner},
}

@article {Thurston90,
    AUTHOR = {Thurston, William P.},
     TITLE = {Conway's tiling groups},
   JOURNAL = {Amer. Math. Monthly},
  FJOURNAL = {American Mathematical Monthly},
    VOLUME = {97},
      YEAR = {1990},
    NUMBER = {8},
     PAGES = {757--773},
      ISSN = {0002-9890,1930-0972},
   MRCLASS = {52C20 (05B45 20F32 52C22)},
  MRNUMBER = {1072815},
MRREVIEWER = {Marjorie\ Senechal},
}

@article{HarLichTon2024,
  author       = {Ivailo Hartarsky and Lyuben Lichev and Fabio Lucio Toninelli},
  title        = {Local dimer dynamics in higher dimensions},
  journal      = {Annales de l'Institut Henri Poincaré, Comb. Phys. Interact.},
  year         = {2024},
  note         = {Published online first},
}

@article {CygPilSkrek,
    AUTHOR = {Cygan, Marek and Pilipczuk, Marcin and \v{S}krekovski, Riste},
     TITLE = {A bound on the number of perfect matchings in {K}lee-graphs},
   JOURNAL = {Discrete Math. Theor. Comput. Sci.},
  FJOURNAL = {Discrete Mathematics \& Theoretical Computer Science. DMTCS.},
    VOLUME = {15},
      YEAR = {2013},
    NUMBER = {1},
     PAGES = {37--52},
      ISSN = {1365-8050},
   MRCLASS = {05C70 (05C30)},
  MRNUMBER = {3022928},
MRREVIEWER = {Vahan\ V.\ Mkrtchyan},
}

@article {ElkKupLarPropp,
    AUTHOR = {Elkies, Noam and Kuperberg, Greg and Larsen, Michael and
              Propp, James},
     TITLE = {Alternating-sign matrices and domino tilings},
   JOURNAL = {J. Algebraic Combin.},
  FJOURNAL = {Journal of Algebraic Combinatorics. An International Journal},
    VOLUME = {1},
      YEAR = {1992},
    NUMBER = {2},
     PAGES = {111--132, 219--234},
      ISSN = {0925-9899,1572-9192},
   MRCLASS = {52C20 (05B45)},
  MRNUMBER = {1226347},
}

@book{Biggs, 
place={Cambridge}, 
edition={2}, 
series={Cambridge Mathematical Library}, 
title={Algebraic Graph Theory}, 
publisher={Cambridge University Press}, 
author={Biggs, Norman}, 
year={1974}, 
collection={Cambridge Mathematical Library},
}

@article{Fisher66,
    author = {Fisher, Michael E.},
    title = {On the Dimer Solution of Planar Ising Models},
    journal = {Journal of Mathematical Physics},
    volume = {7},
    number = {10},
    pages = {1776-1781},
    year = {1966},
    month = {10},
    issn = {0022-2488},
}

@article {KenRemila,
    AUTHOR = {Kenyon, Claire and R\'emila, Eric},
     TITLE = {Perfect matchings in the triangular lattice},
   JOURNAL = {Discrete Math.},
  FJOURNAL = {Discrete Mathematics},
    VOLUME = {152},
      YEAR = {1996},
    NUMBER = {1-3},
     PAGES = {191--210},
      ISSN = {0012-365X,1872-681X},
   MRCLASS = {05C70},
  MRNUMBER = {1388642},
MRREVIEWER = {Stanislav\ Jendrol'},
}

@article{RoisingZhang,
    author = "R{\o}ising, Henrik Schou and Zhang, Zhao",
    title = "{Ergodic Archimedean dimers}",
    primaryClass = "math-ph",
    journal = "SciPost Phys. Core",
    volume = "6",
    pages = "054",
    year = "2023"
}

@article {CiucuLiuYang,
    AUTHOR = {Ciucu, Mihai and Liu, Yan and Yang, Chunxia},
     TITLE = {Perfect matchings of {F}isher graphs of cubic graphs},
   JOURNAL = {Kyushu J. Math.},
  FJOURNAL = {Kyushu Journal of Mathematics},
    VOLUME = {66},
      YEAR = {2012},
    NUMBER = {2},
     PAGES = {291--302},
      ISSN = {1340-6116,1883-2032},
   MRCLASS = {05A15 (82B20)},
  MRNUMBER = {3051338},
}

@book {CiucuNotes10,
    AUTHOR = {Ciucu, Mihai},
     TITLE = {Perfect matchings and applications},
    SERIES = {COE Lecture Note},
    VOLUME = {26},
      NOTE = {Math-for-Industry (MI) Lecture Note Series},
 PUBLISHER = {Kyushu University, Faculty of Mathematics, Fukuoka},
      YEAR = {2010},
     PAGES = {iv+67},
   MRCLASS = {05-01 (05C70 82B23)},
  MRNUMBER = {2663560},
}

@article {MehlhornMichail,
    AUTHOR = {Mehlhorn, Kurt and Michail, Dimitrios},
     TITLE = {Minimum cycle bases: faster and simpler},
   JOURNAL = {ACM Trans. Algorithms},
  FJOURNAL = {ACM Transactions on Algorithms},
    VOLUME = {6},
      YEAR = {2010},
    NUMBER = {1},
     PAGES = {Art. 8, 13},
      ISSN = {1549-6325,1549-6333},
   MRCLASS = {05C85 (05C35 05C50 68R10 68W20 68W25)},
  MRNUMBER = {2654912},
MRREVIEWER = {Mohammed\ M.\ Jaradat},
}

@book {Algorithms,
    AUTHOR = {Cormen, Thomas H. and Leiserson, Charles E. and Rivest, Ronald
              L. and Stein, Clifford},
     TITLE = {Introduction to algorithms},
   EDITION = {Third},
 PUBLISHER = {MIT Press, Cambridge, MA},
      YEAR = {2009},
     PAGES = {xx+1292},
      ISBN = {978-0-262-03384-8},
   MRCLASS = {68-01 (05-01 05C85 68P05 68P10 68Q25 68Wxx)},
  MRNUMBER = {2572804},
}

@article {LaslierToninelli23,
    AUTHOR = {Laslier, Beno\^it and Toninelli, Fabio},
     TITLE = {The mixing time of the lozenge tiling {G}lauber dynamics},
   JOURNAL = {Ann. H. Lebesgue},
  FJOURNAL = {Annales Henri Lebesgue},
    VOLUME = {6},
      YEAR = {2023},
     PAGES = {907--940},
      ISSN = {2644-9463},
   MRCLASS = {60K35 (52C20 82C20)},
  MRNUMBER = {4648100},
MRREVIEWER = {Simone\ Baldassarri},
}

@misc{AggarwalToninelli26,
      title={Mixing times for Glauber dynamics of lozenge tilings of the hexagon}, 
      author={Amol Aggarwal and Fabio Toninelli},
      year={2026},
      eprint={2605.26664},
      archivePrefix={arXiv},
      primaryClass={math.PR},
}

@misc{Petrov_simulations,
  author       = {Leonid Petrov},
  title        = {Simulations},
  howpublished = {\url{https://lpetrov.cc/simulations/2025-12-08-triangular-dimers/}},
}

\end{document}